\theoremstyle{plain}
\newtheorem{theorem}{Theorem}[section]
\newtheorem*{theorem*}{Theorem}
\newtheorem{lemma}[theorem]{Lemma}
\newtheorem{proposition}[theorem]{Proposition}
\numberwithin{equation}{section}
\theoremstyle{remark}
\newtheorem{remark}[theorem]{Remark}
 \def\C{\mathbb C}
\def \N{\mathbb N}
\def\wtilde{\widetilde}
\newcommand{\coloneqq}{\mathrel{\mathop:}=}
\newcommand{\R}{\mathbb{R}}
\newcommand{\In}{\operatorname{In}}
\begin{document}

\title{A generalized Hurwitz stability criterion via rectangular block Hankel matrices for nonmonic matrix polynomials}

\author{Zixiang Ni}  
\address{
Department of Mathematics, Beijing Normal University at Zhuhai,
    Zhuhai 519087, PR China
}
\email{zxni@mail.bnu.edu.cn}

\author{Yongjian Hu}  
\address{
Department of Mathematics, Beijing Normal University at Zhuhai,
    Zhuhai 519087, PR China; School of Mathematical Sciences, Beijing Normal University, Beijing 100875, PR China
}
\email{yongjian@bnu.edu.cn}

\author{Xuzhou Zhan$^\ast$} 
\address{
Department of Mathematics, Beijing Normal University at Zhuhai,
    Zhuhai 519087, PR China
}
\email{xzzhan@bnu.edu.cn}
\thanks{$^\ast$Corresponding author}

\subjclass[2010]{93D20, 65F15, 34D20, 47A56, 15A18}

\date{\today}

\keywords{Hurwitz stability, 
  matrix polynomials, 
  Markov parameters, 
  Hankel matrices, 
  column reduction}

\begin{abstract}
We develop a Hurwitz stability criterion for nonmonic matrix polynomials via column reduction, generalizing existing approaches constrained by the monic assumption and thus serving as a more natural extension of Gantmacher’s classical stability criterion via Markov parameters.
Starting from redefining the associated Markov parameters through a column-wise adaptive splitting method, our framework constructs two structured matrices whose rectangular Hankel blocks are obtained via the extraction of these parameters. 
We establish an explicit interrelation between the inertias of column reduced matrix polynomials and the derived structured matrices. Furthermore, we demonstrate that the Hurwitz stability of column reduced matrix polynomials can be determined by the Hermitian positive definiteness of these rectangular block Hankel matrices. 
\end{abstract}

\maketitle

\section{Introduction}\label{SectionIntro}

Consider a matrix polynomial 
\begin{equation}\label{RepresentF} 
F(\lambda)=A_{0} \lambda^{n}+A_{1} \lambda^{n-1}+\cdots+A_{n},\quad A_0\neq 0_p,
\end{equation}
where $A_0,\ldots, A_n$ are $p\times p$ complex matrices and called the coefficients of $F(\lambda)$. Particularly, $A_0$ is called the leading coefficient of $F(\lambda)$, and $n$ is called the degree of $F(\lambda)$, denoted by $\deg F(\lambda)$. We call $F(\lambda)$ to be monic 
if $A_0=I_p$. Assume that $F(\lambda)$ is regular, i.e., $\det F(\lambda)$ is not identically zero. 
Recall that a complex value $\mu$ is called a finite eigenvalue 
of $F(\lambda)$ if $\det F(\mu)=0$.  
The localization of eigenvalues within typical regions (e.g. the unit disk  \cite{DyV,LT82,SD}, the major half-planes \cite{CIK,NN,MMW,ZBH}, and the sectors \cite{Mat,MMW}) plays a critical role in analyzing the stability of differential/difference systems, as well as the performance of signal processing.
The spectral inclusion regions are crucial as well for the performance in the shift-and-invert methods and the error analysis for  the polynomial eigenvalue solvers (see, e.g., \cite{BH,RB,KG} and the references therein). 

The eigenvalue localization problem considered in this paper is to determine whether a regular matrix polynomial $F(\lambda)$ is Hurwitz stable, i.e., all finite eigenvalues of $F(\lambda)$ are located in the open left half-plane. The Hurwitz stability of $F(\lambda)$ in \eqref{RepresentF} directly characterizes the asymptotic stability of the associated high-order differential system
\begin{equation*}
A_{0} y^{(n)}(t)+A_{1} y^{(n-1)}(t)+\cdots+A_{n} y(t)=u(t),
\end{equation*} 
where $y(t)$ and $u(t)$ denote the output vector and the control input vector, respectively \cite{GLRMP}.

Recent decades have witnessed the development of Hurwitz stability analysis for (regular) matrix polynomials and its application to robust control, avoiding direct determinant or eigenvalue computations. A number of testing tools include the structured algebraic constructs \cite{CM,LRT,LT82,XD,Gal}, LMI optimization \cite{HAP,HAPS,LPJ} and complex-analytic properties \cite{BA,Hu,ZH}. For low-degree matrix polynomials, recent sufficient stability criteria can be directly derived from their coefficients via linearization \cite{MMW,RB}.
 
In \cite{XD}, the Hurwitz stability conditions are algebraically characterized in relationship to the Hermitian positive definiteness of two block Hankel matrices.
This provides a matricial refinement of Gantmacher's classical stability criterion via Markov parameters \cite[Theorem 17, Chapter XV]{Gan}. 
However, the framework proposed in \cite{XD} imposes a crucial restriction: the matrix polynomial under consideration must be monic (or more generally, has a nonsingular leading coefficient) to ensure the well-posedness of its Markov parameters. 

In this paper, we extend this relationship to nonmonic matrix polynomials (whose leading coefficients are not necessarily nonsingular)--structures that naturally arise in differential-algebraic systems and inherently encode their algebraic constraints (see, e.g., \cite{KM}). 
The column reduced matrix polynomials (see the definition in Section \ref{Sec2}), covering canonical forms such as the Popov normal form \cite{Popov} and Hermite normal form \cite{Hermite}, constitute a fundamental class of regular matrix polynomials.
Each regular matrix polynomial can be transformed via column (row) reduction into a column (row) reduced matrix polynomial without variation of its eigenvalues,
yielding
a polynomial basis with minimal column (or row) degrees for the free $\mathbb C[\lambda]$-module generalized by its columns (or rows). We refer the reader to \cite{GJV,LNZ,NRS} and the reference therein for details on fast randomized or deterministic algorithms for column (row) reduction.

For a nonmonic matrix polynomial $F(\lambda)$ that is conducted via column reduction, we establish a representation for its inertia, which further enables a characterization of Hurwitz stability for $F(\lambda)$. Our method utilizes the finite Hankel and B\'ezoutian matrices associated with the column reduced matrix polynomials, drawing foundational insights from the work of Van Barel et al.
\cite{BPV} and the references therein.  
Our strategy starts from extending the foundational framework by redefining‌ matricial Markov parameters‌ for $F(\lambda)$‌.  Note that the Li\'{e}nard-Chipard splitting is essential in the construction of the Markov parameters in \cite{XD}, which divides the original matrix polynomial $F(\lambda)$ into the even-odd parts. However, this splitting fails to produce the desired Markov parameters in nonmonic settings. 
To resolve this, we propose a ‌column-wise adaptive splitting method, which operates in two steps:

(1) Decomposition: Apply the Li\'{e}nard-Chipard splitting independently to each column of 
$F(\lambda)$ (see \ref{step1}).

(2) Reassembly: Merge the resulting even-odd components to form the columns of a pair of matrix polynomials $(F_d(\lambda), F_s(\lambda))$  (see~\ref{step2}).

This column-wise adaptive splitting preserves several key advantages of the original method in \cite{XD}. The degrees of the resulting pair of matrix polynomials are  halved, thus reducing the arithmetic complexity in the matrix computation.
Further, the so-called dominant part $F_d(\lambda)$ maintains a column-reduced structure, which ensures the well-definedness of the rational matrix (i.e., the matrix whose
entries are rational functions) $F_s(\lambda)(F_d(\lambda))^{-1}$ and the associated sequence of Markov parameters $\mathscr S$ (see \ref{step3} and Algorithm \ref{alg1}).
In this regard,
two rectangular block Hankel matrices can be built
via the following steps:

(1) Extraction: Extract some submatrices from each Markov parameter by retaining rows and columns indexed by a particular column set of the dominant part $F_d(\lambda)$ (see \ref{step4}).

(2) Hankelization: Construct the structured matrix $\mathbf{H}_{0}^{\mathscr S}$ and the shifted counterpart $\mathbf{H}_{1}^{\mathscr S}$, where the compressed Markov parameters constitute the rectangular Hankel blocks (see \ref{step5}).  

Following \cite{XD}, we require $\mathscr S$ to be Hermitian. This property, which naturally holds for the sequence of the real Markov parameters in the scalar case, may be ensured through appropriate feedback control design in practice.
By exploiting the structure features of the finite Hankel and finite B\'ezoutian matrices, we link the inertia of $F(\lambda)$ with those of $\mathbf{H}_{0}^{\mathscr S}$ and $\mathbf{H}_{1}^{\mathscr S}$ (see Theorem \ref{ThmInertia}). 
Moreover, we demonstrate that the Hurwitz stability of $F(\lambda)$ can be determined by the Hermitian positive definiteness of $\mathbf{H}_{0}^{\mathscr S}$ and $\mathbf{H}_{1}^{\mathscr S}$ (see Theorem \ref{ThmHurwitz}). These results reduce to the inertia representation  \cite[Theorem 3.5]{XD} and the Hurwitz stability criterion \cite[Theorem 4.4]{XD} when $F(\lambda)$ is monic, thus developing a more natural, column-reduced extension of Gantmacher's classical stability criterion via Markov parameters (see Remark \ref{RemReduction}). 
Compared with the treatment \cite{XD} under the restrictive monic assumption, the determining  Hermitian  matrices $\mathbf{H}_{0}^{\mathscr S}$ and  $\mathbf{H}_{1}^{\mathscr S}$ are compressed into the form of rectangular Hankel blocks and their sizes can be reduced in the column reduced setting. 
Furthermore, their Hermitian positive definiteness can be verified simultaneously via parallelizable algorithms, yielding practical computational efficiencies in large-scale problems. Our stability criterion can be readily adapted to row reduced matrix polynomials through an analogous construction, although we omit the details here for brevity.

This paper is organized as follows. Section \ref{Sec2} introduces some notation and concepts on matrix polynomials. In Section \ref{Sec3} we extend some properties of finite Hankel matrices and finite B\'ezoutian matrices. Section \ref{sec4} is the main part of the whole paper, which is divided into two subsections. In Subsection \ref{subsec4.1}, we construct the sequence of Markov parameters for column-reduced matrix polynomials. Subsection \ref{subsec4.2} establishes an explicit  representation for the inertia of column reduced matrix polynomials and derives Hurwitz stability criteria for such polynomials. Illustrative numerical examples are also provided in this section. The definition of the extended infinite companion matrix used in Section \ref{Sec3} is given in Appendix \ref{SecApp}.

\section{Preliminaries}\label{Sec2}
 Let $\C$, $\R$, $\N_0$ and $\N$ be the sets
of all complex, real, nonnegative integer, and positive integer numbers, respectively. We denote by $\mathbb C^{p\times q}$ the set of all $p\times q$ complex matrices, and by  $I_p$ the $p\times p$ identity matrix. 
Given a matrix $A$, we denote by $\bar A$ and $A^*$ its conjugate matrix and  conjugate transpose matrix respectively. If $A\in \mathbb C^{p\times p}$ is a Hermitian matrix, we write $A\succ 0$ if it is positive definite, and $A\succeq 0$ if $A$ is nonnegative definite.
Given a matrix $A\in \mathbb C^{p\times q}$ and two index sets $\mathcal I_r\subseteq \{1,\ldots,p\}$ and $\mathcal I_c\subseteq \{1,\ldots,q\}$, let $A_{\mathcal I_r}$  and $A^{\mathcal I_c}$ denote the submatrix of $A$ formed by columns of $A$ with indices belonging to $\mathcal I_c$ (in increasing order) and the submatrix of $A$ formed by rows of $A$ with indices belonging to $\mathcal I_r$ (in increasing order), respectively. For simplicity we write $A_{\mathcal I_r}^{\mathcal I_c}$ for $(A_{\mathcal I_r})^{\mathcal I_c}$.

We use the symbol $\mathbb C[\lambda]^{p\times q}$ to stand for the set of $p\times q$ matrix polynomials, which is the set of matrix polynomials in $\lambda$ with coefficients from $\mathbb C^{p\times q}$. In particular, $\mathbb C[\lambda]=\mathbb C[\lambda]^{1\times 1}$. 
Given a matrix polynomial~$F(\lambda)\in\mathbb C[\lambda]^{p\times p}$ written as
in~\eqref{RepresentF}, define $F^{\vee}(\lambda),\ F^{T}(\lambda),\ \overline{F}(\lambda)\in\mathbb C[\lambda]^{p\times p}$ by
\begin{equation*}
F^{\vee}(\lambda)\coloneqq\sum_{k=0}^n A^*_k \lambda^{n-k},\quad F^{T}(\lambda)\coloneqq\sum_{k=0}^n A^T_k \lambda^{n-k},\quad \overline{F}(\lambda)\coloneqq\sum_{k=0}^n \overline{A_k} \lambda^{n-k}.
\end{equation*}
Obviously, $F^{\vee}(\lambda)=\overline{F^{T}}(\lambda)=(\overline{F})^{T}(\lambda)$.
Suppose that $F(\lambda), L(\lambda)\in \mathbb C[\lambda]^{p\times p}$. $L(\lambda)$ is called a right divisor
of $F(\lambda)$ 
if there exists an
    $M(\lambda)\in \mathbb C[\lambda]^{p\times p}$ such that
    \begin{equation*}
        F(\lambda)=M(\lambda)L(\lambda).
    \end{equation*}
    Let additionally $\wtilde F(\lambda)\in \mathbb C[\lambda]^{p\times p}$. Then~$L(\lambda)$ is called a common right divisor
of~$F(\lambda)$ and $\wtilde F(\lambda)$ if
    $L(\lambda)$ is a right  divisor of $F(\lambda)$ and also a right  divisor of
    $\wtilde F(\lambda)$.
  Furthermore, $L(\lambda)$ is called a greatest
    common right divisor (GCRD) of $F(\lambda)$ and $\wtilde F(\lambda)$
    if any other  common right divisor is a right divisor of $L(\lambda)$. In particular, $F(\lambda)$ and $\wtilde F(\lambda)$ are said to be right coprime if any  
   GCRD of $F(\lambda)$ and $\wtilde F(\lambda)$ is unimodular, that is, its determinant is a nonzero constant \cite{NV,Kai}.

 Recalling the definition of the inertia of a matrix: For~$A\in \mathbb C^{p\times p}$, the
inertia of~$A$ with respect to the imaginary axis~${\rm i}\R$ is defined by the
triple \label{PagInertia}
\begin{equation*}
\In(A)\coloneqq(\pi(A),\nu(A),\delta(A)),
\end{equation*}
where~$\pi(A)$, $\nu(A)$, and~$\delta(A)$ stand for the number of eigenvalues (counting
algebraic multiplicities) of~$A$ with positive, negative, and zero real parts, respectively.

For the inertia of regular matrix polynomials in the  complex plane, let us adopt the notation from~\cite{LRT}, which is
essentially as in~\cite{LT82} (see also~\cite[Proposition 2.2]{LRT}).
    Let~$F(\lambda)\in {\mathbb C}[\lambda]^{p\times p}$ be regular. For a finite eigenvalue $\mu$ of $F(\lambda)$,
its algebraic multiplicity is the multiplicity of $\mu$ as a zero of $\det F(\lambda)$. 
The spectrum of $F(\lambda)$, denoted by $\sigma(F)$, is the set of all finite eigenvalues of $F(\lambda)$ in $\mathbb C$. The eigenvalues of matrix polynomials can also be defined in the extended complex plane. Let $F(\lambda)$ be a nonzero matrix polynomial of degree $n$.
The reversal matrix polynomial of $F(\lambda)$ is defined as follows:
  $$
  {Rev}[F](\lambda)=\lambda^n F(\lambda^{-1}).
  $$
  Recall that $\infty$ is called an eigenvalue of $F(\lambda)$ if $0$ is an eigenvalue of ${Rev}[F](\lambda)$. Further, its algebraic multiplicity, denoted by $\gamma_{\infty}(F)$, is the algebraic multiplicity of $0$ as an eigenvalue of ${Rev}[F](\lambda)$. Denote by~$\gamma_+(F)$, $\gamma_-(F)$, $\gamma_{0}(F)$ the number of
    finite eigenvalues of~$F(\lambda)$ (counting with algebraic multiplicities), in the open upper half plane, the
    open lower half plane and on the real line (excluding infinity), respectively. The triple
\begin{equation*}
\gamma(F)\coloneqq(\gamma_+(F),\gamma_-(F),\gamma_{0}(F))
\end{equation*}
 is called 
the inertia
of~$F(\lambda)$ with respect to~$\R$.
Analogously, the triple
\begin{equation*}
\gamma'(F)\coloneqq(\gamma'_+(F),\gamma'_-(F),\gamma'_{0}(F)),
\end{equation*}
is called the inertia of~$F(\lambda)$
with respect to~${\rm i}\R$, replacing the upper half plane by the right half
plane, the lower half plane by the left half plane, and the real axis (excluding infinity)
by the imaginary axis ${\rm i}\mathbb R$ (excluding infinity), respectively.  It is obvious that 
\begin{equation*}
\deg \det F=\gamma'_+(F)+\gamma'_-(F)+\gamma'_{0}(F)=\gamma_+(F)+\gamma_-(F)+\gamma_{0}(F).
\end{equation*}

   For a column vector polynomial $f(\lambda)\in \mathbb{C}[\lambda]^{p\times 1}$, we  denote by deg $f$ the degree of  $f(\lambda)$, that is, the highest degree of all entries of $f(\lambda)$.
Let $F(\lambda)\in \mathbb C[\lambda]^{p\times p}$ be regular and partitioned into $p$ columns
\begin{equation}\label{Partition}
F(\lambda)=\begin{bmatrix} f_1(\lambda) & \cdots & f_p(\lambda)\end{bmatrix},
\end{equation}
where $f_k(\lambda)\in \mathbb C[\lambda]^{p\times 1}$. 
Then $({\deg\, f_k})_{k=1}^p$ is called the sequence of column degrees of $F(\lambda)$ and $\deg\, f_k$ is denoted by ${\rm cdeg}_k F$ for $k\in \{1,\ldots,p\}$. Further, $F(\lambda)$ can be written as 
$$
F(\lambda)=A\cdot {\rm diag}(\lambda^{{\rm cdeg}_1 F},\ldots, \lambda^{{\rm cdeg}_p F})+L(\lambda),
$$
where ${\rm cdeg}_k L<{\rm cdeg}_k F$ for $k=1,\ldots,p$.
Here $A$ is called the highest column degree coefficient matrix of $F(\lambda)$ and denoted by $F_{\mathrm{hcdc}}$.
In particular,
$F(\lambda)$ is called column reduced if $F_{\mathrm{hcdc}}$ is nonsingular.

The concept of row reduced matrix polynomials is defined as the row analogue to column reduced matrix polynomials. For a row vector polynomial $f(\lambda)\in \mathbb{C}[\lambda]^{1\times p}$, we  denote by deg $f$ the degree of  $f(\lambda)$, that is, the highest degree of all entries of $f(\lambda)$. Let $F(\lambda)\in \mathbb C[\lambda]^{p\times p}$ be regular and partitioned into $p$ rows
\begin{equation*}
F(\lambda)=\begin{bmatrix}
f_1(\lambda)\\
\vdots\\
f_p(\lambda)
\end{bmatrix}
\end{equation*}
where $f_k(\lambda)\in \mathbb C[\lambda]^{1\times p}$. 
Then $({\deg\, f_k})_{k=1}^p$ are called the sequence of row degrees of $F(\lambda)$ and $\deg\, f_k$ is denoted by ${\rm rdeg}_k F$ for $k\in \{1,\ldots,p\}$. Further, $F(\lambda)$ can be written as
$$
F(\lambda)={\rm diag}(\lambda^{{\rm rdeg}_1 F},\ldots,\lambda^{{\rm rdeg}_p F})\cdot A+L(\lambda),
$$
where ${\rm rdeg}_k L<{\rm rdeg}_k F$ for $k=1,\ldots,p$. Here $A$ is called the highest row degree coefficient matrix of $F(\lambda)$ and denoted by $F_{\mathrm{hrdc}}$.
In particular,
$F(\lambda)$ is called row reduced if $F_{\mathrm{hrdc}}$ is nonsingular.
Obviously, $F(\lambda)$ is row reduced if and only if $F^{T}(\lambda)$ is column reduced, or equivalently, $F^{\vee}(\lambda)$ is column reduced.

 The following notations are needed to label the rows/columns of infinite Hankel matrices and B\'ezoutian matrices introduced in Section \ref{Sec3}.
Given a regular matrix polynomial $F(\lambda)\in \mathbb C[\lambda]^{p\times p}$, its row index set (see \cite{BPV}) is defined as
$$
\mathcal I_r(F):=\bigcup_{i=1}^p[i]^r_{F},
$$
where 
$$
[i]^r_{F}:=\begin{cases}
\emptyset, & {\rm if }\ {\rm rdeg}_iF=0;\\
\{t:t=jp+i,0\leq j< {\rm rdeg}_iF\}, & {\rm if }\ {\rm rdeg}_iF>0.
\end{cases}
$$  
It is natural for us to define the column counterpart, that is, the column index set  of $F(\lambda)$ is given as
$$
\mathcal I_c(F):=\bigcup_{i=1}^p[i]^c_{F},
$$
where 
$$
[i]^c_{F}:=\begin{cases}
\emptyset, & {\rm if }\ {\rm cdeg}_iF=0;\\
\{t:t=jp+i,0\leq j< {\rm cdeg}_iF\}, & {\rm if }\ {\rm cdeg}_iF>0.
\end{cases}
$$
Obviously, there is a connection between the above two notations:
\begin{equation}\label{Icr}
\mathcal I_c(F)=\mathcal I_{r}(F^{\vee})=\mathcal I_{r}(F^{T}).
\end{equation}

\section{Finite Hankel matrices and B\'ezoutian matrices}\label{Sec3}

For our stability analysis in Section \ref{sec4}, this section extends some properties of finite Hankel matrices and B\'ezoutian matrices associated with column/row reduced matrix polynomials.

We denote by $\mathbb C(\lambda)$ the field of rational functions over $\mathbb C[\lambda]$, and by $\mathbb C(\lambda)^{p\times q}$ the set of $p\times q$ rational matrices whose  elements belong to $\mathbb C(\lambda)$. Obviously $\mathbb C[\lambda]^{p\times q}\subseteq \mathbb C(\lambda)^{p\times q}$.
Suppose that $R(\lambda)\in \mathbb C(\lambda)^{p\times p}$.  $R(\lambda)$ is called proper (resp. strictly proper) if 
$
\lim \limits_{\lambda\rightarrow \infty} R(\lambda)\ \mbox{is a nonzero constant matrix}$
(resp.$\ \lim \limits_{\lambda\rightarrow \infty} R(\lambda)=0_p).
$
In the proper case, assume that $R(\lambda)$ admits the Laurent series
\begin{equation}\label{LaurentSeriesRsp}
       R(\lambda)={\bf s}_{-1}+\sum_{j=0}^{\infty} \frac{{\bf s}_j}{\lambda^{j+1}},\quad \lambda\rightarrow \infty.
     \end{equation}
Define an infinite block Hankel matrix associated with~$R(\lambda)$ as
$$
\mathscr{H}_{\infty}(R):=({\bf s}_{i+j})_{i,j=0}^{\infty}.
$$
Suppose that $R(\lambda)$ admits both left and right matrix fraction descriptions
\begin{equation}\label{Matrixfrac}
R(\lambda)=(D_l(\lambda))^{-1}N_l(\lambda)=N_r(\lambda)(D_r(\lambda))^{-1},
\end{equation}
where $\{D_l(\lambda), N_l(\lambda), D_r(\lambda), N_r(\lambda)\}\subseteq \mathbb C[\lambda]^{p\times p}$, $D_l(\lambda)$ is column reduced and $D_r(\lambda)$ is row reduced. 
In 2001, Van Barel et al. \cite{BPV} construct the so-called finite Hankel matrix corresponding to the matrix fraction decomposition \eqref{Matrixfrac} of $R(\lambda)$, that is,
a submatrix of $\mathscr{H}_{\infty}(R)$ given via 
$$
\mathscr{H}(D_l,N_l;N_r,D_r):=\mathscr{H}_{\infty}(R)^{\mathcal I_r(D_r)}_{\mathcal I_c(D_l)}\in \mathbb C^{\deg \det D_l\times \deg \det D_r}.
$$ 

\cite[Theorem 50]{BPV1999} demonstrates that, for a strictly proper rational matrix $R(\lambda)$ with the matrix fraction descriptions \eqref{Matrixfrac},
$\mathscr{H}_{\infty}(R)$ has a factorization formula in terms of $\mathscr{H}(D_l,N_l;N_r,D_r)$ and two infinite  companion matrices (see the definition in Appendix \ref{SecApp}). 
The following result is an immediate extension of \cite[Theorem 50]{BPV1999} for the proper case. 
\begin{proposition}\label{ProHinfty}
Let $R(\lambda)\in \mathbb C(\lambda)^{p\times p}$ be a proper rational matrix with the matrix fraction decomposition \eqref{Matrixfrac}, where $\{D_l(\lambda), N_l(\lambda), D_r(\lambda), N_r(\lambda)\}\subseteq \mathbb C[\lambda]^{p\times p}$, $D_l(\lambda)$ is column reduced and $D_r(\lambda)$ is row reduced. 
Then 
$$
\mathscr{H}_{\infty}(R)=C_{\infty}(D_l^T)^T \mathscr{H}(D_l,N_l;N_r,D_r) C_{\infty}(D_r). 
$$
\end{proposition}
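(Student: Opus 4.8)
The plan is to reduce the proper case to the strictly proper case already settled in \cite[Theorem 50]{BPV1999}. The decisive observation is that the infinite Hankel matrix $\mathscr{H}_{\infty}(R)=({\bf s}_{i+j})_{i,j=0}^{\infty}$ only involves the Laurent coefficients ${\bf s}_0,{\bf s}_1,\ldots$ of nonnegative index, so the constant term ${\bf s}_{-1}=\lim_{\lambda\to\infty}R(\lambda)$ plays no role in it. First I would set $R_0(\lambda):=R(\lambda)-{\bf s}_{-1}$, which is strictly proper and whose Laurent coefficients of index $\geq 0$ agree with those of $R(\lambda)$; hence $\mathscr{H}_{\infty}(R)=\mathscr{H}_{\infty}(R_0)$.

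Next I would manufacture matrix fraction descriptions for $R_0$ that keep the denominators $D_l$ and $D_r$ intact. From $R_0=(D_l)^{-1}N_l-{\bf s}_{-1}=(D_l)^{-1}\bigl(N_l-D_l{\bf s}_{-1}\bigr)$ and $R_0=N_r(D_r)^{-1}-{\bf s}_{-1}=\bigl(N_r-{\bf s}_{-1}D_r\bigr)(D_r)^{-1}$, I set $\widehat{N}_l:=N_l-D_l{\bf s}_{-1}$ and $\widehat{N}_r:=N_r-{\bf s}_{-1}D_r$. These are again polynomial matrices, $D_l$ stays column reduced, $D_r$ stays row reduced, and $R_0=(D_l)^{-1}\widehat{N}_l=\widehat{N}_r(D_r)^{-1}$ is strictly proper, so $(D_l,\widehat{N}_l;\widehat{N}_r,D_r)$ is an admissible strictly proper description.

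Since the finite Hankel matrix is extracted from $\mathscr{H}_{\infty}(\cdot)$ using only the index sets $\mathcal I_c(D_l)$ and $\mathcal I_r(D_r)$, which are unchanged, the equality $\mathscr{H}_{\infty}(R)=\mathscr{H}_{\infty}(R_0)$ yields $\mathscr{H}(D_l,\widehat{N}_l;\widehat{N}_r,D_r)=\mathscr{H}(D_l,N_l;N_r,D_r)$. Likewise the companion matrices $C_{\infty}(D_l^T)$ and $C_{\infty}(D_r)$ depend on $D_l$ and $D_r$ alone. Applying \cite[Theorem 50]{BPV1999} to $R_0$ then gives $\mathscr{H}_{\infty}(R_0)=C_{\infty}(D_l^T)^T\,\mathscr{H}(D_l,\widehat{N}_l;\widehat{N}_r,D_r)\,C_{\infty}(D_r)$, and substituting the two identities above produces the claimed formula for $\mathscr{H}_{\infty}(R)$. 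The main point requiring care is to confirm that the hypotheses of \cite[Theorem 50]{BPV1999} hold verbatim after replacing the numerators: namely that the result genuinely uses only the strict properness of $R_0$ together with the reducedness of the fixed denominators, and that no hidden degree constraint on the numerators is disturbed by the shift $N\mapsto N-D{\bf s}_{-1}$. Once this is verified, the reduction is immediate.
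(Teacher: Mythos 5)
Your reduction is correct and is precisely the argument the paper has in mind: it states Proposition \ref{ProHinfty} without proof as an ``immediate extension'' of \cite[Theorem 50]{BPV1999}, and your observation that $\mathscr{H}_{\infty}(R)$ ignores ${\bf s}_{-1}$, together with the shift $N_l\mapsto N_l-D_l{\bf s}_{-1}$, $N_r\mapsto N_r-{\bf s}_{-1}D_r$ (which keeps $D_l$, $D_r$, the index sets $\mathcal I_c(D_l)$, $\mathcal I_r(D_r)$, and even any coprimeness of the fractions intact), is exactly that extension made explicit.
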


 Let $F(\lambda)\in \mathbb{C}[\lambda]^{p\times p}$ be regular. Due to \cite[Lemma 14 (4)]{BPV1999}, there exists a unimodular matrix polynomial $U(\lambda)$ such that $$F(\lambda)U(\lambda)=P(\lambda),$$ where $P(\lambda)$ satisfies the following properties:
\begin{enumerate}
    \item[{\rm (i)}] $P(\lambda)$ is column reduced and $P_{\mathrm{hcdc}}$ is unit upper triangular;
    \item[{\rm (ii)}] $P(\lambda)$ is row reduced and $P_{\mathrm{hrdc}}=I_p$.
\end{enumerate} 
In this case, $P(\lambda)$ is called the (unique) Popov canonical form (see, e.g., \cite[Definition 15]{BPV1999}) of $F(\lambda)$. Denote $P(\lambda)$ and $U(\lambda)$ by $\mathscr {P}[F](\lambda)$ and $\mathscr {U}[F](\lambda)$, respectively. We provide the following property for finite Hankel matrices associated with matrix polynomials and their Popov canonical forms.
 
\begin{proposition}\label{ProHH*}
Let $D(\lambda)\in \mathbb C[\lambda]^{p\times p}$ be column reduced. Then $\mathscr{H}(D,I_p;\mathscr U[D],\mathscr P[D])$ is nonsingular and 
\begin{equation}\label{eqHH*}
\mathscr{H}(D,I_p;\mathscr U[D],\mathscr P[D])=\mathscr{H}(\mathscr P[D]^{\vee},\mathscr U[D]^{\vee};I_p,D^{\vee})^*.
\end{equation}
\end{proposition}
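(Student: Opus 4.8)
The plan is to exhibit both finite Hankel matrices as index restrictions of the infinite block Hankel matrices $\mathscr{H}_\infty$ of two rational matrices that are related by the involution $(\cdot)^\vee$, and then to read off the nonsingularity from a coprimeness property and the identity \eqref{eqHH*} from the way $(\cdot)^\vee$ interacts with conjugate transposition.

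First I would pin down the underlying rational matrices. By the defining relation $D(\lambda)\mathscr{U}[D](\lambda)=\mathscr{P}[D](\lambda)$ of the Popov canonical form, the rational matrix in \eqref{Matrixfrac} attached to $\mathscr{H}(D,I_p;\mathscr{U}[D],\mathscr{P}[D])$ is $R(\lambda)=D(\lambda)^{-1}I_p=\mathscr{U}[D](\lambda)\mathscr{P}[D](\lambda)^{-1}=D(\lambda)^{-1}$, whose left denominator $D$ is column reduced and whose right denominator $\mathscr{P}[D]$ is row reduced, as required. Writing $D(\lambda)=D_{\mathrm{hcdc}}\,\mathrm{diag}(\lambda^{\mathrm{cdeg}_k D})+L(\lambda)$ with $\mathrm{cdeg}_k L<\mathrm{cdeg}_k D$ and $D_{\mathrm{hcdc}}$ nonsingular, one sees that $R=D^{-1}$ is proper, so the Laurent expansion \eqref{LaurentSeriesRsp} and $\mathscr{H}_\infty(R)$ are well defined. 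Since $\mathscr{U}[D]$ is unimodular we have $\deg\det\mathscr{P}[D]=\deg\det D$, so $\mathscr{H}(D,I_p;\mathscr{U}[D],\mathscr{P}[D])$ is square of this size and the word ``nonsingular'' is meaningful.

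For nonsingularity I would observe that both matrix fraction descriptions of $R$ are irreducible: the left one $D^{-1}I_p$ is left coprime because every common left divisor of $D$ and $I_p$ divides $I_p$ and is therefore unimodular, and the right one $\mathscr{U}[D]\,\mathscr{P}[D]^{-1}$ is right coprime because every common right divisor divides the unimodular factor $\mathscr{U}[D]$. The finite Hankel matrix of an irreducible proper matrix fraction description is nonsingular --- this is the block analogue, within the framework of \cite{BPV,BPV1999}, of the classical fact that the Hankel matrix of the Markov parameters of a coprime scalar proper rational function is invertible; equivalently, in Proposition \ref{ProHinfty} the two infinite companion matrices then have full rank and $\operatorname{rank}\mathscr{H}_\infty(R)$ equals the McMillan degree $\deg\det D$ of $R$, forcing $\mathscr{H}(D,I_p;\mathscr{U}[D],\mathscr{P}[D])$ to have full rank. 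I expect this coprimeness/rank input to be the only genuinely external ingredient.

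For the identity \eqref{eqHH*}, the key observation is that $F^\vee(\lambda)=F(\bar\lambda)^*$ holds pointwise, so $(\cdot)^\vee$ is an involution that reverses products; applying it to $D\,\mathscr{U}[D]=\mathscr{P}[D]$ yields $\mathscr{P}[D]^\vee=\mathscr{U}[D]^\vee D^\vee$. Consequently the rational matrix attached to $\mathscr{H}(\mathscr{P}[D]^\vee,\mathscr{U}[D]^\vee;I_p,D^\vee)$ is $(\mathscr{P}[D]^\vee)^{-1}\mathscr{U}[D]^\vee=(D^\vee)^{-1}=R(\bar\lambda)^*$, and expanding $R(\bar\lambda)^*$ in \eqref{LaurentSeriesRsp} shows that its Laurent coefficients are $\mathbf{s}_j^*$, so that $\mathscr{H}_\infty\big((D^\vee)^{-1}\big)=\mathscr{H}_\infty(R)^*$. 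Now $\mathscr{H}(\mathscr{P}[D]^\vee,\mathscr{U}[D]^\vee;I_p,D^\vee)=\mathscr{H}_\infty\big((D^\vee)^{-1}\big)^{\mathcal{I}_r(D^\vee)}_{\mathcal{I}_c(\mathscr{P}[D]^\vee)}$; taking its conjugate transpose turns $\mathscr{H}_\infty\big((D^\vee)^{-1}\big)$ into $\mathscr{H}_\infty(R)$ and swaps the roles of the two index sets, while \eqref{Icr} together with the involutivity of $(\cdot)^\vee$ gives $\mathcal{I}_r(D^\vee)=\mathcal{I}_c(D)$ and $\mathcal{I}_c(\mathscr{P}[D]^\vee)=\mathcal{I}_r(\mathscr{P}[D])$. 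Hence the conjugate transpose equals $\mathscr{H}_\infty(R)^{\mathcal{I}_r(\mathscr{P}[D])}_{\mathcal{I}_c(D)}=\mathscr{H}(D,I_p;\mathscr{U}[D],\mathscr{P}[D])$, which is exactly \eqref{eqHH*}. I anticipate the main difficulty here to be purely combinatorial, namely keeping the row and column index sets aligned under conjugate transposition; the identities in \eqref{Icr} and $(\cdot)^{\vee\vee}=\mathrm{id}$ are precisely what make the two sides match.
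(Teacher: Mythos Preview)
Your proposal is correct and follows essentially the same route as the paper: nonsingularity is obtained from the right coprimeness of $(\mathscr U[D],\mathscr P[D])$ (and the trivial coprimeness with $I_p$) via the corresponding result in \cite{BPV}, and the identity \eqref{eqHH*} is derived by noting that the Laurent coefficients of $(D^{\vee})^{-1}$ are the conjugate transposes of those of $D^{-1}$, together with the index-set swap \eqref{Icr}. The only cosmetic difference is that the paper phrases the Hankel relation as ``each block of $\mathscr H_\infty(D^{-1})$ is the conjugate transpose of the corresponding block of $\mathscr H_\infty(D^{-\vee})$'' rather than $\mathscr H_\infty((D^{\vee})^{-1})=\mathscr H_\infty(D^{-1})^*$, but these are equivalent.
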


\begin{proof}
Observing the right coprimeness of $\mathscr U[D]$ and $\mathscr P[D]$ and that of $I_p$ and $D^{\vee}(\lambda)$, together with \cite[Remark 15]{BPV}, we obtain that $\mathscr{H}(D,I_p;\mathscr U[D],\mathscr P[D])$ is nonsingular. 
By \eqref{Icr}, we have
\begin{equation}\label{eq1HH*}
\mathscr{H}(D,I_p;\mathscr U[D],\mathscr P[D])=\mathscr{H}_{\infty}(D^{-1})^{\mathcal I_r(\mathscr P[D])}_{\mathcal I_c(D)}=\mathscr{H}_{\infty}(D^{-1})^{\mathcal I_c(\mathscr P[D]^{\vee})}_{\mathcal I_r(D^{\vee})}.
\end{equation}
Note that each block element of $\mathscr{H}_{\infty}(D^{-1})$ is the conjugate transpose of the corresponding one of  $\mathscr{H}_{\infty}(D^{-\vee})$. Then we derive that
\begin{equation}\label{eq2HH*}
\mathscr{H}_{\infty}(D^{-1})^{\mathcal I_c(\mathscr P[D]^{\vee})}_{\mathcal I_r(D^{\vee})}
=\left(\mathscr{H}_{\infty}(D^{-\vee})^{\mathcal I_r(D^{\vee})}_{\mathcal I_c(\mathscr P[D]^{\vee})}\right)^*=\mathscr{H}(\mathscr P[D]^{\vee},\mathscr U[D]^{\vee};I_p,D^{\vee})^*.
\end{equation}
Hence, \eqref{eqHH*} follows from a combination of \eqref{eq1HH*} and \eqref{eq2HH*}.
\end{proof}

Suppose that $R(\lambda)$ admits the left and right matrix fraction descriptions \eqref{Matrixfrac},
where $\{{D}_l(\lambda), {N}_l(\lambda)$, $D_r(\lambda), {N}_r(\lambda)\}\subseteq \mathbb C[\lambda]^{p\times p}$, ${D}_l(\lambda)$ is row reduced and ${D}_r(\lambda)$ is column reduced.
In this case, it is natural for us to define an alternative finite Hankel matrix as the counterpart given via 
$$
{\mathbf{H}}(D_l,N_l;N_r,D_r):=\mathscr{H}_{\infty}(R)^{\mathcal I_r(D_l)}_{\mathcal I_c(D_r)}\in \mathbb C^{\deg \det  D_r\times \deg \det D_l}.
$$

A rational matrix $R(\lambda)\in \mathbb C(\lambda)^{p\times p}$ is called symmetric with respect to $\mathbb R$ if it obeys that $R(t)=R(\bar t)^*$ for all $t\in\mathbb C$ except the poles of the entries of $R(\lambda)$ (see more details in \cite{DNQD}). 
In what follows, we bridge a connection between two types of finite Hankel matrices related to these typical rational matrices.

\begin{proposition}\label{ProHscrH}
Let $R(\lambda)\in \mathbb C(\lambda)^{p\times p}$ be symmetric with respect to $\mathbb R$ with the matrix fraction descriptions 
\begin{equation}\label{SymMatrixfrac}
R(\lambda)=(D_r^{\vee}(\lambda))^{-1}N_r^{\vee}(\lambda)=N_r(\lambda)(D_r(\lambda))^{-1},
\end{equation}
where $\{D_r(\lambda), N_r(\lambda)\}\subseteq \mathbb C[\lambda]^{p\times p}$, and $(D_r)_{\mathrm{hcdc}}$ is nonsingular upper triangular.
Then
\begin{equation*}
\mathbf{H}(D^{\vee}_r,N^{\vee}_r;N_r,D_r)=\mathscr{H}(\mathscr P[D_r]^{\vee},\mathscr U[D_r]^{\vee} N_r^{\vee};N_r \mathscr U[D_r],\mathscr P[D_r]).
\end{equation*}
\end{proposition}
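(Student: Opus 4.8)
\emph{The plan} is to realize both sides as submatrices of one and the same infinite block Hankel matrix $\mathscr H_\infty(R)$, and then to check that the row and column index sets they select coincide. Throughout I write $P:=\mathscr P[D_r]$ and $U:=\mathscr U[D_r]$, so that $P=D_rU$ with $U$ unimodular, and I use the anti-multiplicativity $(FG)^\vee=G^\vee F^\vee$, which is immediate from the reformulation $F^\vee(\lambda)=(F(\bar\lambda))^\ast$. Since $R$ is proper, $\mathscr H_\infty(R)$ is defined, and unwinding the definitions of $\mathbf H$ and $\mathscr H$ will express each of the two sides as an extraction $\mathscr H_\infty(R)^{\bullet}_{\bullet}$.

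First I would verify that the matrix-fraction data on the right-hand side genuinely describe $R$. From $P=D_rU$ I get $P^\vee=U^\vee D_r^\vee$, whence
$(P^\vee)^{-1}(U^\vee N_r^\vee)=(D_r^\vee)^{-1}(U^\vee)^{-1}U^\vee N_r^\vee=(D_r^\vee)^{-1}N_r^\vee=R$ and $(N_rU)P^{-1}=N_rUU^{-1}D_r^{-1}=N_rD_r^{-1}=R$, the last equality in each chain being the given description \eqref{SymMatrixfrac}. Thus the right-hand $\mathscr H$ is formed from $\mathscr H_\infty(R)$, exactly as the left-hand $\mathbf H$ is. The reducedness hypotheses needed to form these finite Hankel matrices all hold: $D_r$ is column reduced because $(D_r)_{\mathrm{hcdc}}$ is nonsingular, so $D_r^\vee$ is row reduced; and $P$ is row reduced with $P_{\mathrm{hrdc}}=I_p$ and column reduced with $P_{\mathrm{hcdc}}$ unit upper triangular, so $P^\vee$ is column reduced.

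With both sides read off $\mathscr H_\infty(R)$, the definitions give $\mathbf H(D_r^\vee,N_r^\vee;N_r,D_r)=\mathscr H_\infty(R)^{\mathcal I_r(D_r^\vee)}_{\mathcal I_c(D_r)}$ and $\mathscr H(P^\vee,U^\vee N_r^\vee;N_rU,P)=\mathscr H_\infty(R)^{\mathcal I_r(P)}_{\mathcal I_c(P^\vee)}$. Applying \eqref{Icr} in the forms $\mathcal I_r(D_r^\vee)=\mathcal I_c(D_r)$ and $\mathcal I_c(P^\vee)=\mathcal I_r\big((P^\vee)^\vee\big)=\mathcal I_r(P)$, the whole proposition reduces to the single identity $\mathcal I_c(D_r)=\mathcal I_r(P)$. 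Here the degree structure of the Popov form enters: since $P_{\mathrm{hcdc}}$ is unit upper triangular and $P_{\mathrm{hrdc}}=I_p$, the diagonal entry $P_{ii}$ simultaneously realizes the $i$-th column degree and the $i$-th row degree, so $\mathrm{cdeg}_iP=\deg P_{ii}=\mathrm{rdeg}_iP$ for every $i$, and hence $\mathcal I_r(P)=\mathcal I_c(P)$. It remains to prove $\mathcal I_c(D_r)=\mathcal I_c(P)$; because the blocks $[i]^c_\bullet$ occupy disjoint residue classes modulo $p$ and $\lvert[i]^c_\bullet\rvert=\mathrm{cdeg}_i(\cdot)$, this is equivalent to the per-index equality $\mathrm{cdeg}_iD_r=\mathrm{cdeg}_iP$ for all $i$.

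\emph{The main obstacle} is precisely this per-index equality of column degrees. Passing from $D_r$ to $P=D_rU$ preserves the multiset of column degrees (both are column reduced bases of the same column module), but may permute their assignment to the columns; in fact, without the upper-triangular hypothesis one can exhibit a column reduced $D_r$ whose Popov form reverses the order of the column degrees, so that $\mathcal I_c(D_r)\neq\mathcal I_c(P)$. The hypothesis that $(D_r)_{\mathrm{hcdc}}$ be nonsingular upper triangular is exactly what pins the ordering down. Concretely, for a column reduced matrix with upper triangular highest-column-degree coefficient matrix the $j$-th column degree is realized on the diagonal, $\mathrm{cdeg}_j=\deg(\cdot)_{jj}$, with every strictly-below-diagonal entry of lower degree; expanding the leading principal $k\times k$ minor $\Delta_k$ over permutations, the identity permutation dominates and $\deg\Delta_k=\sum_{j=1}^k\mathrm{cdeg}_j$ for each $k$. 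Matching these cumulative degrees for $D_r$ and for $P$ then gives $\mathrm{cdeg}_iD_r=\mathrm{cdeg}_iP$ by successive differences. The delicate point is reconciling the leading-minor degrees of $D_r$ and of $P=D_rU$, which do not transform transparently under right multiplication by the unimodular $U$; this is where the upper-triangular normalization must be used in full, and it can alternatively be quoted from the column-reduction and Popov-form machinery of \cite{BPV}.
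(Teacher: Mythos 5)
Your global route coincides with the paper's own: you realize both sides as extractions from $\mathscr H_\infty(R)$, verify that $(\mathscr P[D_r]^\vee)^{-1}(\mathscr U[D_r]^\vee N_r^\vee)=R=(N_r\mathscr U[D_r])\mathscr P[D_r]^{-1}$ (a point the paper leaves implicit, so this is a welcome addition), invoke \eqref{Icr} together with the coincidence $\mathrm{cdeg}_k\mathscr P[D_r]=\mathrm{rdeg}_k\mathscr P[D_r]$ coming from the Popov normalization, and thereby reduce the whole proposition to the per-index equality $\mathrm{cdeg}_k D_r=\mathrm{cdeg}_k\mathscr P[D_r]$ for all $k$ --- precisely the paper's steps \eqref{IcDF} and \eqref{IcF}. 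Up to that reduction your argument is complete and correct.

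At the pivotal step, however, there is a genuine gap, which you yourself flag but do not close. Your plan is to match the degrees of leading principal minors via $\deg\Delta_k=\sum_{j\le k}\mathrm{cdeg}_j$, an identity that does hold for each of $D_r$ and $\mathscr P[D_r]$ separately under their upper-triangular normalizations; but the required input $\deg\Delta_k(\mathscr P[D_r])=\deg\Delta_k(D_r)$ is never established. By Cauchy--Binet, $\Delta_k(D_r\mathscr U[D_r])$ is a sum over column selections involving $k\times k$ minors of the unimodular $\mathscr U[D_r]$, whose degrees are uncontrolled, and since each cumulative sum $\sum_{j\le k}\mathrm{cdeg}_j$ determines the per-index degrees by differencing, asserting the equality of these sums is essentially equivalent to the statement being proved --- the argument is circular at its crux, and deferring to ``the machinery of \cite{BPV}'' does not discharge it. The paper closes this step instead by a short direct computation modeled on the proof of \cite[Lemma 14 (2)]{BPV}: write $\mathscr U[D_r]=D_r^{-1}\mathscr P[D_r]=\mathrm{diag}(\lambda^{-\mathrm{cdeg}_1 D_r},\ldots,\lambda^{-\mathrm{cdeg}_p D_r})\,\bigl(A+\wtilde R(\lambda)\bigr)\,\mathrm{diag}(\lambda^{\mathrm{cdeg}_1\mathscr P[D_r]},\ldots,\lambda^{\mathrm{cdeg}_p\mathscr P[D_r]})$ with $A=((D_r)_{\mathrm{hcdc}})^{-1}\mathscr P[D_r]_{\mathrm{hcdc}}$ and $\wtilde R(\lambda)$ strictly proper. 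Since both highest-column-degree coefficient matrices are upper triangular with nonzero diagonal, so is $A$; polynomiality of the $(k,k)$ entry $\lambda^{\mathrm{cdeg}_k\mathscr P[D_r]-\mathrm{cdeg}_k D_r}\bigl(a_{kk}+o(1)\bigr)$ of $\mathscr U[D_r]$ with $a_{kk}\neq 0$ forces $\mathrm{cdeg}_k\mathscr P[D_r]\ge\mathrm{cdeg}_k D_r$, and summing over $k$ while using $\deg\det\mathscr P[D_r]=\deg\det D_r$ (both matrices column reduced, $\mathscr U[D_r]$ unimodular) turns all inequalities into equalities. Substituting this argument for your minor-degree matching repairs the proof; note it is exactly where the upper-triangularity of $(D_r)_{\mathrm{hcdc}}$ is consumed, consistent with your correct observation that without that hypothesis only the multiset of column degrees is preserved.
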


\begin{proof}

Analogous to the proof of \cite[Lemma 14 (2)]{BPV1999}, one can get that
\begin{align*}
\mathscr{U}[D_r](\lambda)=&D_r(\lambda)^{-1} \mathscr{P}[D_r](\lambda)\\
=&\mathrm{diag}(\lambda^{-\mathrm{cdeg}_1D_r},\ldots,\lambda^{-\mathrm{cdeg}_pD_r})(A+\wtilde R(\lambda))\\
&\cdot \mathrm{diag}(\lambda^{\mathrm{cdeg}_1\mathscr{P}[D_r]},\ldots,\lambda^{\mathrm{cdeg}_p\mathscr{P}[D_r]})
\end{align*}
where $A:=((D_r)_{\mathrm{hcdc}})^{-1}\mathscr{P}[D_r]_{\mathrm{hcdc}}$ and $\wtilde R(\lambda)\in \mathbb C(\lambda)^{p\times p}$ is strictly proper.
From the nonzero diagonal entries of $A$ and the column reducedness of $D_r(\lambda)$ and $\mathscr{P}[D_r](\lambda)$, we deduce that 
\begin{equation*}
\mathrm{cdeg}_k \mathscr{P}[D_r] = \mathrm{cdeg}_k D_r, \quad k=1,\ldots,p.
\end{equation*}
Thus,
\begin{equation}\label{IcDF}
\mathcal{I}_c(D_r)=\mathcal{I}_c(\mathscr{P}[D_r]).
\end{equation}
It is clear that 
$$
\mathrm{cdeg}_k (\mathscr {P}[D_r])=\mathrm{rdeg}_k (\mathscr {P}[D_r]),
\quad k=1,\ldots,p,
$$
which shows that
\begin{equation}\label{IcF}
\mathcal{I}_c(\mathscr {P}[D_r])=\mathcal{I}_r(\mathscr {P}[D_r]).
\end{equation}
Hence, combining \eqref{Icr}, \eqref{IcDF} and \eqref{IcF} yields that
$$
\mathbf{H}(D^{\vee}_r, N^{\vee}_r; N_r, D_r) 
    = \mathscr{H}_{\infty}(R)^{\mathcal{I}_r(\mathscr{P}[D_r])}_{\mathcal{I}_c(\mathscr{P}[D_r]^{\vee})}
    = \mathscr{H}\bigl(\mathscr{P}[D_r]^{\vee}, \mathscr{U}[D_r]^{\vee} N_r^{\vee}; 
        N_r \mathscr{U}[D_r], \mathscr{P}[D_r]\bigr).
$$

\end{proof}

Now we dig slightly deeper to seek interior block structure of  finite Hankel matrices.
Let $F(\lambda)\in \mathbb C[\lambda]^{p\times p}$.
For $i=-1,\ldots,\deg F$, let $\mathcal I^r_i(F)$ denote the index set of all row indices for which the row degrees of $F(\lambda)$ exceed $i$
$$
\mathcal I^r_i(F):=\{k:i< {\rm rdeg}_kF\leq \deg F\}\subseteq \{1,\ldots,p\}.
$$
Similarly, define $
\mathcal I^c_i(F)$ as 
$$
\mathcal I^c_i(F):=\{k:i< {\rm cdeg}_kF\leq \deg F\}\subseteq \{1,\ldots,p\}.
$$
The following clearly holds:
\begin{remark}\label{RemIndex}
Let $F(\lambda)\in \mathbb C[\lambda]^{p\times p}$. Then
\begin{enumerate}
\item[{\rm (i)}] $\mathcal I^c_{\deg F}(F)=\mathcal I^r_{\deg F}(F)=\emptyset.$
\item[{\rm (ii)}] For $i=-1,\ldots,\deg F-1$, $\mathcal I^c_i(F)=\mathcal I^r_i(F^{\vee})=\mathcal I^r_i(F^{T}).$
    \item[{\rm (iii)}] $\{{\mathcal I}_i^r\}_{i=-1}^{\deg F-1}$ constitutes a nested family of subsets:
    $$ \{k:{\rm rdeg}_kF=\deg F\}=\mathcal I^r_{\deg F-1}(F)\subseteq \mathcal I^r_{\deg F-2}(F) \subseteq \cdots \subseteq \mathcal I^r_{-1}(F)=\{1,\ldots,p\}.$$
    \item[{\rm (iii)}] $\{{\mathcal I}_i^c\}_{i=-1}^{\deg F-1}$ constitutes a nested family of subsets: $$ \{k:{\rm cdeg}_kF=\deg F\}=\mathcal I^c_{\deg F-1}(F)\subseteq \mathcal I^c_{\deg F-2}(F) \subseteq \cdots \subseteq \mathcal I^c_{-1}(F)=\{1,\ldots,p\}.$$
    \item[{\rm (iv)}] If $F(\lambda)$ is monic, 
    $
\mathcal I^c_{i}(F)=\mathcal I^r_{i}(F)=\{1,\ldots,p\}$ for $i=-1,\ldots,\deg F-1.
    $
\end{enumerate}

\end{remark}

\begin{proposition}\label{LemHs}
Let $R(\lambda)\in \mathbb C(\lambda)^{p\times p}$ be a strictly proper rational matrix that admits the Laurent series \eqref{LaurentSeriesRsp} and matrix fraction decomposition \eqref{Matrixfrac}, where $\{D_l(\lambda), N_l(\lambda),D_r(\lambda), N_r(\lambda)\}\subseteq \mathbb C[\lambda]^{p\times p}$, $D_l(\lambda)$ is row reduced, and $D_r(\lambda)$ is column reduced.
Then \begin{equation}\label{HR}
\mathbf{H}(D_l,N_l;N_r,D_r)=({\bf  s}_{i,j})_{i,j=0}^{\deg D_r-1,\deg D_l-1},
\end{equation}
where ${\bf  s}_{i,j}:=\left({\bf s}_{i+j}\right)_{\mathcal I^c_i(D_r)}^{\mathcal I^r_j(D_l)}\in |\mathcal I^c_i(D_r)|\times |\mathcal I^r_j(D_l)|$.

\end{proposition}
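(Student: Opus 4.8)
The plan is to obtain the block partition \eqref{HR} by reading the defining formula $\mathbf{H}(D_l,N_l;N_r,D_r)=\mathscr{H}_{\infty}(R)^{\mathcal I_r(D_l)}_{\mathcal I_c(D_r)}$ directly off the block structure of $\mathscr{H}_{\infty}(R)=(\mathbf{s}_{i+j})_{i,j=0}^{\infty}$, whose $(i,j)$ block $\mathbf{s}_{i+j}$ is supplied by the Laurent coefficients in \eqref{LaurentSeriesRsp}. The entire argument is index bookkeeping: each linear index $t$ labelling a row or a column of $\mathscr{H}_{\infty}(R)$ factors uniquely as $t=ip+\alpha$ with block index $i\ge 0$ and in-block position $\alpha\in\{1,\ldots,p\}$, and the block $\mathbf{s}_{i+j}$ occupies exactly the rows $ip+1,\ldots,ip+p$ and the columns $jp+1,\ldots,jp+p$. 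Here $\mathcal I_c(D_r)$ selects the rows and $\mathcal I_r(D_l)$ the columns, in agreement with the stated size $\mathbf{H}\in\mathbb C^{\deg\det D_r\times\deg\det D_l}$ together with $|\mathcal I_c(D_r)|=\deg\det D_r$ and $|\mathcal I_r(D_l)|=\deg\det D_l$, which hold because $D_r$ is column reduced and $D_l$ is row reduced.

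First I would treat the rows. Relabelling the dummy variables in the definition of $\mathcal I_c(D_r)$, a row index $t=ip+\alpha$ lies in $\mathcal I_c(D_r)$ if and only if $0\le i<{\rm cdeg}_\alpha D_r$, that is, if and only if $\alpha\in\mathcal I^c_i(D_r)$. Hence, for each fixed block index $i$, the retained in-block positions are precisely those of $\mathcal I^c_i(D_r)$; by Remark \ref{RemIndex} this set is empty as soon as $i\ge\deg D_r$. Because forming a submatrix preserves the increasing order of the linear indices, and $ip+\alpha<i'p+\alpha'$ is exactly the lexicographic order on the pair $(i,\alpha)$, the selected rows break into consecutive groups indexed by $i=0,1,\ldots,\deg D_r-1$, the $i$-th group being the positions of $\mathcal I^c_i(D_r)$ listed in increasing order.

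The columns are handled symmetrically: a column index $s=jp+\beta$ lies in $\mathcal I_r(D_l)$ if and only if $\beta\in\mathcal I^r_j(D_l)$, these sets vanish for $j\ge\deg D_l$, and the retained columns split into consecutive groups indexed by $j=0,1,\ldots,\deg D_l-1$. Overlaying the two groupings endows $\mathbf{H}(D_l,N_l;N_r,D_r)$ with a conformal block partition in which the $(i,j)$ block is carved out of the $(i,j)$ block $\mathbf{s}_{i+j}$ of $\mathscr{H}_{\infty}(R)$ by retaining the rows indexed by $\mathcal I^c_i(D_r)$ and the columns indexed by $\mathcal I^r_j(D_l)$; that is, the $(i,j)$ block equals $(\mathbf{s}_{i+j})^{\mathcal I^r_j(D_l)}_{\mathcal I^c_i(D_r)}=\mathbf{s}_{i,j}$, which is precisely \eqref{HR}. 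The partition is automatically conformal, since block-row $i$ contributes $|\mathcal I^c_i(D_r)|$ rows regardless of $j$, and dually for the columns.

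The hard part, such as it is, will be the index bookkeeping itself: keeping the two labelling schemes apart (the factorization $t=ip+\alpha$ of a linear index versus the role of $i,j$ as block indices in the target), not swapping the row/column roles played by the subscript and the superscript, and verifying that the increasing-order convention for submatrices is compatible with the block-by-block grouping. Once these identifications are secured, \eqref{HR} follows at once, with the emptiness of $\mathcal I^c_i(D_r)$ and $\mathcal I^r_j(D_l)$ beyond $i=\deg D_r-1$ and $j=\deg D_l-1$ (Remark \ref{RemIndex}) fixing the ranges of the block indices.
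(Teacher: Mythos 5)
Your proof is correct and follows essentially the same route as the paper's: both arguments are pure index bookkeeping, translating membership of a linear index $t=ip+\alpha$ in $\mathcal I_c(D_r)$ (resp.\ $\mathcal I_r(D_l)$) into the per-block condition $\alpha\in\mathcal I^c_i(D_r)$ (resp.\ $\mathcal I^r_j(D_l)$) and observing that the increasing-order convention groups the retained rows and columns into consecutive blocks. The paper phrases this by iterating over the sets $[k]^r_{D_l}$ column by column while you factor the linear indices directly, but this is only a cosmetic difference; your handling of the block ranges via the emptiness of $\mathcal I^c_i(D_r)$ for $i\ge\deg D_r$ matches the paper's use of $\deg D_l=\max_k{\rm rdeg}_k D_l$.
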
 

\begin{proof} Let $k\in \{1,\ldots,p\}$. If ${\rm rdeg}_kD_l=0$,  $[k]^r_{D_l}=\emptyset$. When ${\rm rdeg}_kD_l>0$,  $$[k]^r_{D_l}=\{k,p+k,\ldots,({\rm rdeg}_kD_l-1)p+k\}.$$ In this case,
the columns of $\mathscr{H}_{\infty}( R)^{[k]^r_{D_l}}$ consist of all $k$-th columns of the submatrices 
\begin{equation*}
\begin{bmatrix}
{\bf s}_{0}\\
{\bf s}_{1}\\
\vdots
\end{bmatrix},\begin{bmatrix}
{\bf s}_{1}\\
{\bf s}_{2}\\
\vdots
\end{bmatrix},\ldots,\begin{bmatrix}
{\bf s}_{{\rm rdeg}_kD_l-1}\\
{\bf s}_{{\rm rdeg}_kD_l}\\
\vdots
\end{bmatrix},
\end{equation*}
Note that $\deg D_l=\max \{{\rm rdeg}_kD_l:k=1,\ldots,p\}$.
To sum up, the columns of $\mathscr{H}_{\infty}(R)^{\mathcal I_r(D_l)}$ contain all $k$-th columns of 
$
\begin{bmatrix}
{\bf s}_{j}\\
{\bf s}_{j+1}\\
\vdots
\end{bmatrix},
$
where $k\in \mathcal I^r_j(D_l)$ and $j=0,\ldots,\deg D_l-1$. Analogously, the rows of $\mathscr{H}_{\infty}(R)_{\mathcal I_c(D_r)}$ contain all $k$-th rows of 
$
\begin{bmatrix}
{\bf s}_{i} &
{\bf s}_{i+1} &\cdots
\end{bmatrix},
$
where $k\in \mathcal I^c_i(D_r)$ and $i=0,\ldots,\deg D_r-1$. Thus, the equation \eqref{HR} holds.

\end{proof}

At the end of this section we recall the generalized B\'ezoutian matrix associated with matrix polynomials. 
Let $L(\lambda),\wtilde L(\lambda),M(\lambda), \wtilde M(\lambda)\in \mathbb C[\lambda]^{p\times p}$
satisfy
\begin{equation}\label{CommonMultiple}
    \wtilde M(\lambda)\wtilde L(\lambda)=M(\lambda)L(\lambda).
\end{equation}
The so-called  Anderson-Jury B\'ezoutian matrix ${\mathbf B}_{n_1,n_2}(\wtilde M,\wtilde L;M,L)\in \mathbb C^{n_1 p\times n_2 p}$
associated with the quadruple~$(\wtilde M,\wtilde L,M,L)$ is defined as follows:
\begin{equation*}   
\begin{bmatrix}I_p & \lambda I_p & \cdots & \lambda^{n_1-1}I_p \end{bmatrix}\cdot
    {\mathbf B}_{n_1,n_2}(\wtilde M,\wtilde L;M,L)\cdot
    \begin{bmatrix}I_p \\ \mu I_p \\ \vdots \\ \mu^{n_2-1}I_p \end{bmatrix}
    =\frac1{\lambda-\mu} \left(\wtilde M(\lambda)\wtilde L(\mu)-M(\lambda)L(\mu)\right), 
\end{equation*}
where $n_1\coloneqq\max\{\deg M, \deg \wtilde M\}$ and
$n_2\coloneqq\max\{\deg L, \deg \wtilde L\}$. 

For commuting $L(\lambda)$ and $\wtilde L(\lambda)$, i.e., when~$L(\lambda)\wtilde L(\lambda)=\wtilde L(\lambda)L(\lambda)$, it is natural to choose~$\wtilde M(\lambda)=L(\lambda)$ and~$M(\lambda)=\wtilde L(\lambda)$ such that \eqref{CommonMultiple} holds. For a nontrivial
choice of $\wtilde M(\lambda)$ and $M(\lambda)$ satisfying \eqref{CommonMultiple} in the general non-commutative case, we refer the reader to the
construction of the common multiples via spectral theory of matrix polynomials:
see~\cite[Theorem 9.11]{GLRMP} for the monic case and~\cite[Theorem 2.2]{GKLR} for the comonic
case.

In 2001, Van Barel et al. \cite{BPV} obtain a compact version of B\'ezoutian matrix from the Anderson-Jury B\'ezoutian matrix.
Let $L(\lambda),\wtilde L(\lambda),M(\lambda), \wtilde M(\lambda)\in \mathbb C[\lambda]^{p\times p}$ be a quadruple of matrix polynomials 
satisfying \eqref{CommonMultiple}.
In particular, when $\wtilde M(\lambda)$ is row reduced and $L(\lambda)$ is column reduced, all rows and columns of
${\mathbf B}_{n_1,n_2}(\wtilde M,\wtilde L;M,L)$ are zeros excluding the ones   labeled by the row index set of $\wtilde M$ and the column index set of $L$, respectively. In this case, 
the finite B\'ezoutian matrix ${\mathbf B}(\wtilde M,\wtilde L;M, L)$ is defined as a submatrix of ${\mathbf B}_{n_1,n_2}(\wtilde M,\wtilde L;M,L)$ from deleting the aforementioned zero rows  and columns:
$$
{\mathbf B}(\wtilde M,\wtilde L;M, L):={\mathbf B}_{n_1,n_2}(\wtilde M,\wtilde L;M, L)_{\mathcal I_r({\wtilde M})}^{\mathcal I_c({L})}\in \mathbb C^{m\times l},
$$
where $m=\deg \det \wtilde M$ and $l=\deg \det L$.

In the following we give a congruence relation between the finite B\'ezoutian matrices and finite Hankel matrices:

\begin{proposition}\label{LemBH}
Let $R(\lambda)\in \mathbb C(\lambda)^{p\times p}$ be symmetric with respect to $\mathbb R$ with the matrix fraction descriptions \eqref{SymMatrixfrac},
where $\{D_r(\lambda), N_r(\lambda)\}\subseteq \mathbb C[\lambda]^{p\times p}$, and $(D_r)_{\mathrm{hcdc}}$ is nonsingular upper triangular. Then
$
{\bf B}(D_r^{\vee},N_r;N_r^{\vee}, D_r)$
is congruent to
$ \mathbf{H}(D_r^{\vee},N_r^{\vee};N_r,D_r).
$
\end{proposition}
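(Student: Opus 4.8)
The plan is to exhibit both matrices as compressions of larger objects linked by an explicit congruence, and then to argue that this congruence descends to the compressions. First I would record that the quadruple $(\wtilde M,\wtilde L,M,L)=(D_r^{\vee},N_r,N_r^{\vee},D_r)$ is admissible. Equating the two matrix fractions in \eqref{SymMatrixfrac} and clearing denominators yields $D_r^{\vee}(\lambda)N_r(\lambda)=N_r^{\vee}(\lambda)D_r(\lambda)$, which is exactly the common-multiple relation \eqref{CommonMultiple}. Since $(D_r)_{\mathrm{hcdc}}$ is nonsingular, $D_r$ is column reduced and hence $D_r^{\vee}$ is row reduced, so the finite B\'ezoutian ${\mathbf B}(D_r^{\vee},N_r;N_r^{\vee},D_r)$ is well defined. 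By \eqref{Icr} we have $\mathcal I_r(D_r^{\vee})=\mathcal I_c(D_r)=:\mathcal J$; consequently both ${\mathbf B}(D_r^{\vee},N_r;N_r^{\vee},D_r)$ and $\mathbf H(D_r^{\vee},N_r^{\vee};N_r,D_r)=\mathscr H_\infty(R)_{\mathcal J}^{\mathcal J}$ are the principal submatrices indexed by the \emph{same} set $\mathcal J$, drawn respectively from the Anderson--Jury B\'ezoutian ${\mathbf B}_{n_1,n_2}(D_r^{\vee},N_r;N_r^{\vee},D_r)$ and from $\mathscr H_\infty(R)$.

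The core of the argument is a kernel identity. Substituting $N_r=RD_r$ and $N_r^{\vee}=D_r^{\vee}R$ into the defining bilinear form of the B\'ezoutian gives
\[
\frac{D_r^{\vee}(\lambda)N_r(\mu)-N_r^{\vee}(\lambda)D_r(\mu)}{\lambda-\mu}
=D_r^{\vee}(\lambda)\,\frac{R(\lambda)-R(\mu)}{\mu-\lambda}\,D_r(\mu).
\]
Because $R$ is proper (so that $\mathscr H_\infty(R)$ is defined), the constant Laurent term $\mathbf s_{-1}$ cancels in the difference quotient, whence $\frac{R(\lambda)-R(\mu)}{\mu-\lambda}=\sum_{i,j\ge 0}\mathbf s_{i+j}\lambda^{-i-1}\mu^{-j-1}$ is precisely the generating kernel of $\mathscr H_\infty(R)$. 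Expanding $D_r^{\vee}(\lambda)$ and $D_r(\mu)$ in their coefficients and matching nonnegative powers of $\lambda$ and $\mu$, this identity reads off as a factorization
\[
{\mathbf B}_{n_1,n_2}(D_r^{\vee},N_r;N_r^{\vee},D_r)=C^{*}\,\mathscr H_\infty(R)\,C,
\]
where $C$ is the structured matrix assembled from the coefficients of $D_r$, playing the role of the companion matrix $C_\infty(D_r)$ of Proposition \ref{ProHinfty}. The decisive feature is that the left factor is genuinely $C^{*}$: the coefficients of $D_r^{\vee}$ are the conjugate transposes of those of $D_r$, so the $\mathbb R$-symmetry of $R$ is exactly what converts a mere equivalence into a congruence.

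It then remains to descend from ${\mathbf B}_{n_1,n_2}=C^{*}\mathscr H_\infty(R)C$ to the $\mathcal J$-compressions, and I expect this to be the main obstacle. A naive restriction of the factorization to $\mathcal J$ does \emph{not} split, because the column degrees of $D_r$ are non-uniform, so the columns of $C$ indexed by $\mathcal J$ generally carry mass in rows outside $\mathcal J$. To resolve this I would route the argument through the companion-matrix machinery of Van Barel et al.\ \cite{BPV}: using the nested index structure of Remark \ref{RemIndex} together with the vanishing pattern that already defines the finite B\'ezoutian (its rows and columns outside $\mathcal J$ are zero), the factorization can be reorganized so that the middle factor becomes the principal compression $\mathscr H_\infty(R)_{\mathcal J}^{\mathcal J}=\mathbf H(D_r^{\vee},N_r^{\vee};N_r,D_r)$ while the outer factors become a square matrix $S$ and its conjugate transpose $S^{*}$.

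An equivalent and perhaps cleaner route is to first pass to the Popov canonical form: conjugating the quadruple by the unimodular $\mathscr U[D_r]$ reduces the statement to the case of $\mathscr P[D_r]$, for which $\mathrm{cdeg}_k=\mathrm{rdeg}_k$ (cf.\ the proof of Proposition \ref{ProHscrH}) so that the compression splits, after which Proposition \ref{ProHscrH} identifies the resulting Hankel matrix with $\mathbf H(D_r^{\vee},N_r^{\vee};N_r,D_r)$. In either route the congruence factor $S$ is invertible because, after compression to $\mathcal J$, its extremal blocks assemble into a matrix built from the nonsingular highest-column-degree-coefficient matrix $(D_r)_{\mathrm{hcdc}}$. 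This yields ${\mathbf B}(D_r^{\vee},N_r;N_r^{\vee},D_r)=S^{*}\,\mathbf H(D_r^{\vee},N_r^{\vee};N_r,D_r)\,S$ with $S$ invertible, the desired congruence.
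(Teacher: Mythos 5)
Your proposal is correct in substance and, at the decisive step, funnels into the same machinery as the paper, but it reaches that point by a genuinely more self-contained route. The paper's proof is essentially three lines: it invokes the finite-level Barnett-type formula \cite[Lemma 32]{BPV}, which expresses $\mathscr{H}(\mathscr P[D_r]^{\vee},\mathscr U[D_r]^{\vee}N_r^{\vee};N_r\mathscr U[D_r],\mathscr P[D_r])$ directly as the product $\mathscr{H}(\mathscr P[D_r]^{\vee},\mathscr U[D_r]^{\vee};I_p,D_r^{\vee})\,\mathbf{B}(D_r^{\vee},N_r;N_r^{\vee},D_r)\,\mathscr{H}(D_r,I_p;\mathscr U[D_r],\mathscr P[D_r])$ --- note that the \emph{original} finite B\'ezoutian already sits in the middle, so no infinite-matrix factorization and no transformation of B\'ezoutians is ever needed --- and then concludes via Proposition \ref{ProHH*} (the two outer factors are nonsingular and mutually conjugate-transpose, whence a congruence) and Proposition \ref{ProHscrH} (the left-hand side equals $\mathbf{H}(D_r^{\vee},N_r^{\vee};N_r,D_r)$). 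Your kernel identity, the cancellation of $\mathbf{s}_{-1}$, and the resulting factorization of the Anderson--Jury B\'ezoutian through $\mathscr{H}_{\infty}(R)$ are all correct, and your diagnosis of why the naive $\mathcal J$-restriction fails (the $\mathcal J$-columns of $C$ carry mass in rows outside $\mathcal J$, so the infinite middle factor cannot simply be replaced by its compression) is exactly right. What your derivation buys is transparency about where the congruence structure originates --- the conjugate relation between the coefficients of $D_r^{\vee}$ and $D_r$ --- at the cost of having to re-prove the compression step that \cite[Lemma 32]{BPV} packages.

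Two steps in your descent remain assertions rather than proofs. First, your Popov route does not literally ``reduce the statement to the case of $\mathscr P[D_r]$'': right multiplication by the unimodular $\mathscr U[D_r]$ changes the B\'ezoutian (the kernel transforms to $\mathscr U[D_r]^{\vee}(\lambda)\cdot(\text{original kernel})\cdot\mathscr U[D_r](\mu)$), so you would still owe a congruence between $\mathbf{B}(D_r^{\vee},N_r;N_r^{\vee},D_r)$ and the Popov-transformed B\'ezoutian --- precisely the transformation law that the paper's citation of \cite[Lemma 32]{BPV} avoids by keeping the original B\'ezoutian fixed in the middle. Second, your invertibility argument for $S$ via ``extremal blocks built from $(D_r)_{\mathrm{hcdc}}$'' is plausible but unproven for non-uniform column degrees; the paper instead obtains nonsingularity of $\mathscr{H}(D_r,I_p;\mathscr U[D_r],\mathscr P[D_r])$ from right coprimeness (of $\mathscr U[D_r]$ and $\mathscr P[D_r]$, and of $I_p$ and $D_r^{\vee}$) via \cite[Remark 15]{BPV}, and --- crucially for congruence rather than mere equivalence --- proves the adjoint identity \eqref{eqHH*} between the two outer factors by the index-set bookkeeping of \eqref{Icr}; your appeal to conjugate coefficients yields this at the infinite level but not automatically after compression. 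Neither point is a wrong turn --- both gaps are filled exactly by Propositions \ref{ProHH*}--\ref{ProHscrH} together with \cite[Lemma 32]{BPV} --- but as written they are what separates your sketch from a complete proof.
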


\begin{proof}

In view of \cite[Lemma 32]{BPV} (see also \cite[Theorem 31 (c) (ii)]{BPV}), we derive that
\begin{align*}
&\mathscr{H}(\mathscr P[D_r]^{\vee},\mathscr U[D_r]^{\vee} N_r^{\vee};N_r \mathscr U[D_r],\mathscr P[D_r])\\
=&\mathscr{H}(\mathscr P[D_r]^{\vee},\mathscr U[D_r]^{\vee} ;I_p,D_r^{\vee}) {\bf B}(D_r^{\vee},N_r;N_r^{\vee}, D_r) \mathscr{H}(D_r,I_p;\mathscr U[D_r],\mathscr P[D_r]).
\end{align*}
Therefore, Proposition \ref{LemBH} follows from a combination of  Propositions \ref{ProHH*}--\ref{ProHscrH}.

\end{proof}

\section{Hurwitz stability and Markov parameters of column reduced matrix polynomials}\label{sec4}

In this section, we derive a Hurwitz stability criterion for column reduced matrix polynomials in terms of the related Markov parameters. Our discussion will be divided into two subsections.

\subsection{Construction of Markov parameters and rectangular block Hankel matrices}\label{subsec4.1}

In this subsection, we exhibit a column-wise adaptive splitting method to construct the Markov parameters of column reduced matrix polynomials and the associated rectangular block Hankel matrices. Let us start with an extension of the Li\'{e}nard-Chipard splitting to vector polynomials.

Let $f(\lambda)\in \mathbb{C}^{p\times 1}[\lambda]$ be written as 
\begin{equation*}
f(\lambda)=\alpha_{0} \lambda^{n}+\alpha_{1} \lambda^{n-1}+\cdots+\alpha_{n},
\end{equation*}
where $n:=\deg\,f$ and $\alpha_k\in \mathbb{C}^{p\times 1}$ for $k=0,\ldots,n$.
 Then $f(\lambda)$ can always be split into the even part $f_{e}(\lambda)$ and the odd part $f_{o}(\lambda)$ as
    \begin{equation*}
  f_{e}(\lambda)=\sum_{k=0}^\frac n2\alpha_{2k}\lambda^{\frac n2-k}
  \quad\text{and}\quad
  f_{o}(\lambda)=\sum_{k=1}^\frac n2 \alpha_{2k-1}\lambda^{\frac n2-k}
\end{equation*}
when $n$ is even, and
\begin{equation*}
    f_{e}(\lambda)=\sum_{k=0}^\frac {n-1}2 \alpha_{2k+1}\lambda^{\frac {n-1}2-k}
    \quad\text{and}\quad
    f_{o}(\lambda)=\sum_{k=0}^\frac {n-1}2 \alpha_{2k}\lambda^{\frac {n-1}2-k}
\end{equation*}
when $n$ is odd,
     so that
    \( 
        f(\lambda)=f_{e}(\lambda^2)+\lambda f_{o}(\lambda^2).
    \) Further, $\lambda f_o(\lambda)$ is called the shifted odd part of $f(\lambda)$.

Let $F(\lambda)\in \mathbb C[\lambda]^{p\times p}$ be column reduced with the column partition \eqref{Partition} of nonzero degree $2m$ or $2m+1$.  
Indeed, given a QR decomposition $F_{\mathrm{hcdc}} = QR$ with $Q$ unitary and $R$ nonsingular 
upper triangular, replacing $F(\lambda)$ by $Q^*F(\lambda)$ preserves our subsequent stability analysis. Therefore, without loss of generality, we may assume that  $F_{\mathrm{hcdc}}$ is nonsingular upper triangular.
We perform the adaptive column-wise partitioning on $F(\lambda)$, which is divided into two steps.

\begin{enumerate}[label=\textbf{Step \arabic*}, start=1, align=left, leftmargin=*] 
\item \label{step1} We apply the Li\'{e}nard-Chipard splitting independently to each column $f_k(\lambda)$ of $F(\lambda)$, dividing into the dominant and subordinated parts. That is, 
\begin{enumerate}[label={(\roman*)}, align=left, leftmargin=*, nosep] 
    \item when $\deg f_k$ is even, the even part of $f_k(\lambda)$ is called the dominant part of $f_k(\lambda)$ and denoted by $d_k(\lambda)$.
    \item when $\deg f_k$ is even, the shifted odd part of $f_k(\lambda)$ is called the subordinated part of $f_k(\lambda)$ and denoted by $s_k(\lambda)$.
    \item when $\deg f_k$ is odd, the odd part of $f_k(\lambda)$ is called the dominant part of $f_k(\lambda)$ and denoted by $d_k(\lambda)$.
    \item when $\deg f_k$ is odd, the even part of $f_k(\lambda)$ is called the subordinated part of $f_k(\lambda)$ and denoted by $s_k(\lambda)$.
\end{enumerate}

\item \label{step2}  We reassemble all the dominant parts and the subordinated parts into a pair of 
matrix polynomials. The 
constructed pair of matrix polynomials is of the form
\begin{equation*}
F_d(\lambda):=\begin{bmatrix}
    d_1(\lambda)& \cdots& d_p(\lambda)
\end{bmatrix}\in \mathbb C[\lambda]^{p\times p},\quad F_s(\lambda):=\begin{bmatrix} s_1(\lambda)& \cdots& s_p(\lambda)\end{bmatrix}\in \mathbb C[\lambda]^{p\times p}.
\end{equation*}
Obviously, this polynomial pair $(F_d(\lambda), F_s(\lambda))$ is uniquely determined by $F(\lambda)$ via the following decomposition: 
\begin{equation*}
F(\lambda)=D(\lambda)+S(\lambda),
\end{equation*}
where
\begin{align}
& D(\lambda):=F_d(\lambda^2)\alpha(\lambda)\in \mathbb C[\lambda]^{p\times p},\quad S(\lambda):=\lambda^{-1} F_s(\lambda^2)\alpha(\lambda)\in \mathbb C[\lambda]^{p\times p}, \label{DS} \\
& \alpha(\lambda):={\rm diag}(a_1(\lambda),\ldots,a_p(\lambda)),
\quad  
a_k(\lambda):=\begin{cases}
1, & {\rm cdeg}_k F {\rm\ is\ even},\\
\lambda, & {\rm cdeg}_k F {\rm\ is\ odd}.
\end{cases} \label{alphaz}
\end{align}
Then $F_d(\lambda)$ and $F_s(\lambda)$ are called the dominant and subordinated parts, respectively, of $F(\lambda)$. 
\end{enumerate}

\begin{enumerate}[label=\textbf{Step \arabic*}, start=3, align=left]
\item \label{step3}
We construct the sequence of Markov parameters from the pair $(F_d(\lambda),F_s(\lambda))$ as follows. 
\end{enumerate}

  We begin with the  definition of Markov parameters.  Since $F(\lambda)$ is column reduced,    $F_d(\lambda)$ is column reduced as well from the relation $(F_d)_{\mathrm{hcdc}}=F_{\mathrm{hcdc}}$. 
We construct the following well-defined rational matrix
\begin{equation}\label{Ge}        R_F(\lambda)\coloneqq F_{s}(\lambda)(F_{d}(\lambda))^{-1}\in \mathbb C(\lambda)^{p\times p}.
\end{equation}  
Note that the degree of each column of $F_s(\lambda)$ is no greater than that of $F_d(\lambda)$. 
According to \cite[Lemma 6.3-11]{Kai}, $R_F(\lambda)$ is proper and hence
    admits the Laurent expansion
    \begin{equation}
R_F(\lambda)=\sum_{k=0}^{\infty} (-1)^{k}\lambda^{-k}{\bf s}_k,\quad \lambda\rightarrow \infty. \label{MakrovPE} 
    \end{equation}
Set
$$
l:=\begin{cases}
2m-1, & {\rm cdeg}_j F\ \mbox{\rm is even},\ j=1,\ldots,p;  \\
2m, & {\rm else}.
\end{cases}
$$ 
Then we call the matrix
    sequence~$\mathscr S:=({\bf s}_k)_{k=0}^{l}$ the sequence of
    Markov parameters 
of~$F(\lambda)$.\label{item:def2_i}

Now we give an algorithm to compute the sequence of Markov parameters. The proposed algorithm relies on the formula 
\begin{equation}\label{AlgEq}
F_{s}(\lambda)=\sum_{k=0}^{\infty} (-1)^{k}\lambda^{-k}{\bf s}_k F_{d}(\lambda),
\end{equation}
which is an immediate combination of \eqref{Ge} and \eqref{MakrovPE}. 
Set that
$t:=\lfloor\tfrac{l}{2}\rfloor
$. Then $l=t+m$. Let
$F_d(\lambda)$ and $F_s(\lambda)$ be  of the form 
\begin{align}
&F_d(\lambda)=\sum_{k=0}^{m} A_{k}\cdot {\rm diag}(\lambda^{{\rm cdeg}_1\,F_d-k},\ldots,\lambda^{{\rm cdeg}_p\,F_d-k}), \label{A} \\
&F_s(\lambda)=\sum_{k=0}^{t}B
_{k}\cdot {\rm diag}(\lambda^{{\rm cdeg}_1F_d-k},\ldots,\lambda^{{\rm cdeg}_p\,F_d-k}).\label{B}
\end{align}
Here $A_0, \ldots, A_m$ and $B_0, \ldots, B_t$ are called the  column degree coefficient matrices  of $F_d(\lambda)$ and $F_s(\lambda)$, respectively ($A_0$ is nonsingular upper triangular), which can be easily obtained from the coefficients of $F(\lambda)$.  Comparing the matrix coefficients of both sides in \eqref{AlgEq} yields that 
\begin{equation}\label{S-A}
\begin{bmatrix}
{\bf s}_{t-m+1} & \cdots & {\bf s}_{t}\\
\vdots & \iddots & \vdots\\
{\bf s}_{t} & \cdots & {\bf s}_{l-1}
\end{bmatrix} \begin{bmatrix}
(-1)^{m}A_m\\
\vdots\\
-A_1
\end{bmatrix}=-\begin{bmatrix}
{\bf s}_{t+1}\\
\vdots\\
{\bf s}_{l}
\end{bmatrix}A_0,\quad 
\begin{bmatrix}
(-1)^t B_{t}\\
\vdots\\ 
B_{0}
\end{bmatrix}=
\begin{bmatrix}
{\bf s}_0 & \cdots & {\bf s}_{t}\\
 & \ddots & \vdots\\
& & {\bf s}_0
\end{bmatrix}\begin{bmatrix}
(-1)^t A_t\\
\vdots\\
A_0
\end{bmatrix}.
\end{equation}
The equation \eqref{S-A} reveals a correspondence between
the coefficients and Markov parameters of $F(\lambda)$.
It helps us to derive a recurrence relation for Markov parameters, which is Algorithm \ref{alg1}.

\begin{algorithm}
\caption{}
\begin{multicols}{2}
\begin{algorithmic}[1]\label{alg1}
\REQUIRE The column degree coefficient matrices $A_0, \ldots, A_m, B_0, \ldots, B_m$ of $F_d(\lambda)$ and $F_s(\lambda)$, respectively, given as in \eqref{A}--\eqref{B}
\ENSURE The sequence of Markov parameters of $F(\lambda)$
\STATE $A_{-1}=A_0^{-1}$
\STATE $\mathbf{s}_0=B_{0}A_{-1}$
\STATE $A=\begin{bmatrix}
-A_1\\
\vdots\\
(-1)^m A_m
\end{bmatrix}$
\FOR{$k=1:t$}
\STATE 
$S_k=\begin{bmatrix} {\bf s}_{k-1} & \cdots & {\bf s}_{0}\end{bmatrix}$
    \STATE $\mathbf{s}_k=\big((-1)^kB_k-S_kA_{\{1,\ldots,kp\}}\big)A_{-1}$ \label{Line1}
\ENDFOR
\IF{$m\geq 1$}
\FOR{$k=t+1:l$}
    \STATE $\mathbf{s}_k=-\begin{bmatrix}{\bf s}_{k-1} & \cdots & {\bf s}_{k-m}\end{bmatrix}AA_{-1}$ \label{Line2}
\ENDFOR
\ENDIF
\RETURN  $\mathbf{s}_0, \ldots, \mathbf{s}_{l}$ 
\end{algorithmic}
\end{multicols}
\end{algorithm}

\begin{remark}
In general, there may exist $j\in \{1,\ldots,p\}$ such that ${\rm cdeg}_j\,F_d<k\leq m$ for some $k$. In this case, the $j$-columns of $A_k$ and $B_k$ are both zero vectors and, thus, the corresponding elements of $A$ are zero. The greater the disparity in the column degrees of $F_d(\lambda)$, the more zero elements  $A$ has, thereby enhancing the computational efficiency of all matrix multiplication in Lines \ref{Line1} and \ref{Line2} of Algorithm \ref{alg1}. 
\end{remark}

For simplicity, we write $\mathcal I_i$ for $\mathcal I_i^c(F_d)$ for $i=-1,0,\ldots,m$.
 Let us continue our procedure to associate two structured matrices with rectangular Hankel blocks:

\begin{enumerate}[label=\textbf{Step \arabic*}, start=4, align=left, leftmargin=*]
\item \label{step4} For the dominant part $F_d(\lambda)$ of $F(\lambda)$, we divide its column index sets $\mathcal I_i$ ($i=-1,\ldots,\deg F_d$) by its even/odd column degrees as
\begin{align*}
&\mathcal I_i^e:=\{k : i< {\rm cdeg}_kF_d\leq \deg F_d,\ {\rm cdeg}_kF  {\rm\ is\ even}\},\\
&\mathcal I_i^o:=\{k : i< {\rm cdeg}_kF_d\leq \deg F_d,\ {\rm cdeg}_kF  {\rm\ is\ odd}\}.
\end{align*}
\end{enumerate}
 Define the shifted column index sets 
$$
\wtilde {\mathcal I}_i:=\mathcal I_i^e\cup \mathcal I_{i-1}^o,\quad  i=0,\ldots,\deg F_d.
$$
Obviously $\mathcal I_i=\mathcal I_i^e\cup \mathcal I_i^o$ and thus $\mathcal I_i \subseteq \wtilde {\mathcal I}_i$ for $i=0,\ldots,\deg F_d$.
 From each Markov parameter ${\bf s}_k$, we extract some submatrices by retaining the rows and columns indexed by $\mathcal I_i$ or $\wtilde {\mathcal I}_i$  as 
$$\wtilde {\bf h}_{i,j}:=\left({\bf s}_{i+j}\right)_{\wtilde {\mathcal I}_i}^{\wtilde {\mathcal I}_j}\in |\wtilde {\mathcal I}_i|\times |\wtilde {\mathcal I}_j|,\quad {\bf  h}_{i,j}:=\left({\bf s}_{i+j+1}\right)_{\mathcal I_i}^{\mathcal I_j}\in |\mathcal I_i|\times |\mathcal I_j|.
$$

\begin{remark}\label{RemtildeFd}
The class $\{\wtilde {\mathcal I}_i\}_{i=0}^{\deg F_d}$ constitutes a nested family of subsets:
    $$ \left\{ k : \operatorname{cdeg}_k F_d = \deg F_d,\ \deg F_d {\rm\ is\ odd} \right\}=\wtilde {\mathcal I}_{\deg F_d}\subseteq \wtilde {\mathcal I}_{\deg F_d-1} \subseteq \cdots \subseteq \wtilde {\mathcal I}_0=\{1,\ldots,p\}.$$
\end{remark}

\begin{remark}\label{RemtI}
If $F(\lambda)$ is monic, 
    $
\wtilde {\mathcal I}_i=\{1,\ldots,p\}$ for $i=0,\ldots,\deg F_d-1$. Further, Remark \ref{RemtildeFd} shows that $\wtilde {\mathcal I}_{\deg F_d}=\{1,\ldots,p\}$ when $F(\lambda)$ is monic of odd degree.
\end{remark}

\begin{enumerate}[label=\textbf{Step \arabic*}, start=5, align=left]

\item is divided into two cases: \label{step5}
\begin{itemize}
\item If all column degrees of $F(\lambda)$ are even, we construct the rectangular block Hankel matrix $\mathbf{H}_{0}^{\mathscr S}$ and the shifted rectangular block Hankel matrix $\mathbf{H}_{1}^{\mathscr S}$ associated with $\mathscr S$
\begin{equation}\label{HankelMatrix}
\mathbf{H}_{0}^{\mathscr S}:=(\wtilde {\bf  h}_{i,j})_{i,j=0}^{m-1},\quad \mathbf{H}_{1}^{\mathscr S}:=({\bf h}_{i,j})_{i,j=0}^{m-1}.
\end{equation}

\item  In cases where 
$F(\lambda)$ contains at least one column of odd degree, two corresponding rectangular block Hankel matrices are set to be
\begin{equation}\label{HankelMatrixII}
\mathbf{H}_{0}^{\mathscr S}:=(\wtilde {\bf  h}_{i,j})_{i,j=0}^{m},\quad \mathbf{H}_{1}^{\mathscr S}:=({\bf h}_{i,j})_{i,j=0}^{m-1}.
\end{equation}
\end{itemize}
\end{enumerate}

A sample for the construction of $\mathbf{H}_{0}^{\mathscr S}$ by \ref{step4}--\ref{step5} is illustrated in  Fig. \ref{fig:main1}.
\begin{figure}[H]\centering
\includegraphics[width=\textwidth]{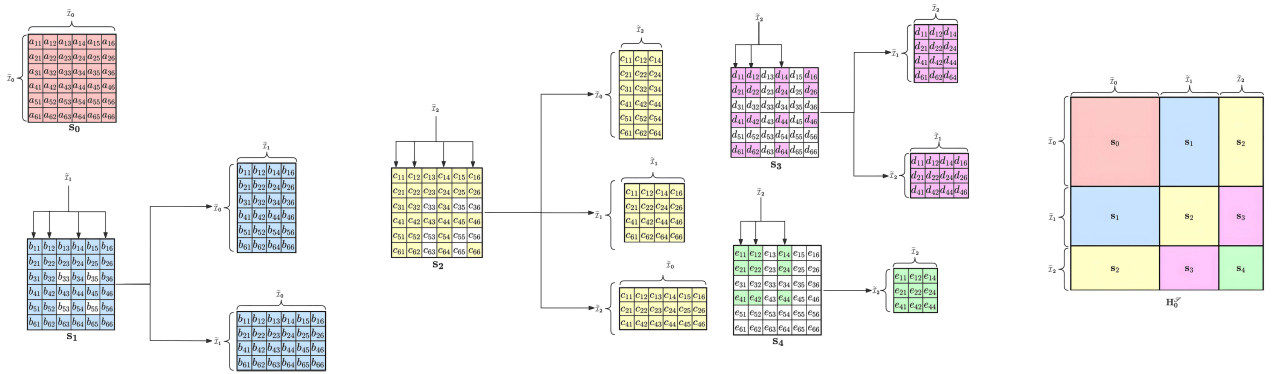}
    \caption{The construction of $\mathbf{H}_{0}^{\mathscr S}$ from a sequence of $6\times 6$ Markov parameters $({\bf s}_k)_{k=0}^4$}
    \label{fig:main1}
\end{figure}

\subsection{Hurwitz stability criterion}\label{subsec4.2}

Let $F(\lambda)\in\mathbb C[\lambda]^{p\times p}$ be column reduced of nonzero degree $2m$ or $2m+1$. For testing the Hurwitz stability of $F(\lambda)$, we associate with the sequence of Markov parameters $\mathscr S:=({\bf s}_{k})_{k=0}^{l}$ and rectangular block Hankel matrices via adaptive column-wise partitioning (see \ref{step1}--\ref{step5}).  It deserves special attention if the sequence of Markov parameters
    $\mathscr S$  is Hermitian, that is,  each Markov parameter ${\bf s}_k$ is Hermitian for $k= 0,\ldots,l$.
In what follows, we demonstrate that the Hermitian property of   $\mathscr S$ induces some alternative  Hermitian or symmetric properties:
\begin{proposition}\label{ProSelfAdjoint}
 Let $F(\lambda)\in\mathbb C[\lambda]^{p\times p}$ be column reduced with the sequence of Markov parameters
$\mathscr S:=({\bf s}_{k})_{k=0}^{l}$. Then the following statements are equivalent:
\begin{enumerate}
\item[{\rm (i)}] $\mathscr S$ is Hermitian;
\item[{\rm (ii)}] $\mathbf{H}_{0}^{\mathscr S}$ and $\mathbf{H}_{1}^{\mathscr S}$ are Hermitian matrices;
\item[{\rm (iii)}]
${\bf s}_0={\bf s}_0^*$, $({\bf s}_{i+j+1})_{\mathcal I_i}^{\mathcal I_j}=\left(({\bf s}_{i+j+1})_{\mathcal I_j}^{\mathcal I_i}\right)^*,\ i,j\in \mathbb N_0,\ i+j=0,\ldots,l-1$;
\item[{\rm (iv)}]
  ${\bf s}_0={\bf s}_0^*$, $\mathbf{H}_{1}^{\mathscr S}$ is a Hermitian matrix; 
\item[{\rm (v)}] $R_F(\lambda)$ is symmetric with respect to $\mathbb R$.
\end{enumerate}
 \end{proposition}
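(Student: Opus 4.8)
The plan is to first fix the algebraic dictionary linking the operation $\vee$ with evaluation, and use it to pin down the precise meaning of (v). For any matrix polynomial one has $F(\bar t)^{*}=F^{\vee}(t)$, so \eqref{Ge} gives $R_F(\bar t)^{*}=(F_d(\bar t)^{*})^{-1}F_s(\bar t)^{*}=(F_d^{\vee})^{-1}F_s^{\vee}(t)=:\wtilde R(t)$. Expanding $\wtilde R$ at infinity and comparing with \eqref{MakrovPE}, the Markov parameters of $\wtilde R$ are exactly ${\bf s}_k^{*}$. Hence (v) is equivalent to $R_F=\wtilde R$, i.e. to the polynomial identity $F_d^{\vee}F_s=F_s^{\vee}F_d$, and equally to the requirement that every ${\bf s}_k$ ($k\in\N_0$, not merely $k\le l$) be Hermitian. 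Writing $E_k:={\bf s}_k-{\bf s}_k^{*}$, this is what couples (v) to the finite list in (i).

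Next I would clear the easy arc $(v)\Rightarrow(i)\Rightarrow(ii)\Rightarrow(iv)\Leftrightarrow(iii)$. The implication $(v)\Rightarrow(i)$ is the truncation $E_k=0$ for $k\le l$. For $(i)\Rightarrow(ii)$, if each ${\bf s}_{i+j}$ is Hermitian then a submatrix computation gives $\wtilde{\bf h}_{i,j}=({\bf s}_{i+j})_{\wtilde{\mathcal I}_i}^{\wtilde{\mathcal I}_j}=(({\bf s}_{i+j})_{\wtilde{\mathcal I}_j}^{\wtilde{\mathcal I}_i})^{*}=\wtilde{\bf h}_{j,i}^{*}$ (conjugate transposition swaps the two index roles), and likewise ${\bf h}_{i,j}={\bf h}_{j,i}^{*}$, so both $\mathbf H_0^{\mathscr S},\mathbf H_1^{\mathscr S}$ are Hermitian. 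Then $(ii)\Rightarrow(iv)$ is immediate once one reads ${\bf s}_0$ off the $(0,0)$ block $\wtilde{\bf h}_{0,0}=({\bf s}_0)_{\wtilde{\mathcal I}_0}^{\wtilde{\mathcal I}_0}={\bf s}_0$, using $\wtilde{\mathcal I}_0=\{1,\dots,p\}$ from Remark \ref{RemtildeFd}. Finally $(iii)\Leftrightarrow(iv)$ is just the unpacking of ``$\mathbf H_1^{\mathscr S}$ Hermitian'' into the block equalities ${\bf h}_{i,j}={\bf h}_{j,i}^{*}$; the apparent range $i+j\le l-1$ in (iii) collapses to $0\le i,j\le m-1$ because $\mathcal I_i=\emptyset$ for $i\ge m$.

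The implication $(i)\Rightarrow(v)$ is then clean and closes the equivalence $(i)\Leftrightarrow(v)$. With $r_k:={\rm cdeg}_kF_d$, assume $E_k=0$ for all $k\le l$, so $\Delta:=R_F-\wtilde R$ has $\Delta_{c,d}=O(\lambda^{-(l+1)})$ \emph{uniformly}. The product $G:=F_d^{\vee}\Delta F_d=F_d^{\vee}F_s-F_s^{\vee}F_d$ is a polynomial, and since $\deg(F_d^{\vee})_{a,c}\le r_a$ and $\deg(F_d)_{d,b}\le r_b$, every entry obeys $\deg G_{a,b}\le r_a+r_b-(l+1)\le 2m-(l+1)$. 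When $l=2m$ this exponent is negative, so $G=0$. When $l=2m-1$ (all column degrees even) $G$ is a constant matrix; but in that case each subordinated column is a shifted odd part, whence $F_s(0)=0=F_s^{\vee}(0)$ and $G(0)=0$, forcing $G=0$. Either way $R_F=\wtilde R$, i.e. (v).

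The one genuinely hard step—the main obstacle—is closing the loop with $(iv)\Rightarrow(i)$, i.e. upgrading the \emph{partial} symmetry carried by $\mathbf H_1^{\mathscr S}$ to \emph{full} Hermitianness of ${\bf s}_0,\dots,{\bf s}_l$. Condition (iv) says precisely that $(E_k)_{a,b}=0$ whenever $0\le k\le r_a+r_b-1$, since the block $({\bf s}_{i+j+1})_{\mathcal I_i}^{\mathcal I_j}$ carries entry $(a,b)$ exactly when $i<r_a$, $j<r_b$, $i+j=k-1$. I would then induct on $k$: granting $E_0=\dots=E_{k-1}=0$, the entries with $k\le r_a+r_b-1$ are Hermitian by (iv), while the remaining entries sit in columns $b$ with $r_b<k$ and are therefore recurrence-determined—the column recurrence extracted from \eqref{S-A} (equivalently \eqref{AlgEq}) expresses ${\bf s}_k(A_0)^{\mathrm{col}\,b}$ through the already-Hermitian ${\bf s}_{k-1},\dots,{\bf s}_{k-r_b}$, and the triangular system with leading factor $A_0$ (nonsingular upper triangular) can be solved column by column. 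The delicate content is that the onset $k\ge r_b+1$ of the column-$b$ recurrence dovetails exactly with the coverage $k\le r_a+r_b-1$ provided by $\mathbf H_1^{\mathscr S}$, so that each entry of $E_k$ is pinned by one mechanism or the other and the recurrence-determined entries are forced Hermitian by the Hermitian ``boundary data''. Verifying this dovetailing in full—especially with unequal column degrees, and for constant columns where the recurrence degenerates to ${\bf s}_k(A_0)^{\mathrm{col}\,b}=0$ for all $k\ge1$—is where the real work lies, after which $(i)\Rightarrow(v)$ finishes the cycle.
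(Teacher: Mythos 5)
Your easy arcs $(v)\Rightarrow(i)\Rightarrow(ii)\Rightarrow(iv)\Leftrightarrow(iii)$ are correct and coincide with what the paper dismisses as ``direct'', and your degree-counting proof of $(i)\Rightarrow(v)$ --- showing $G:=F_d^{\vee}F_s-F_s^{\vee}F_d=F_d^{\vee}\Delta F_d$ has entrywise degree $\le r_a+r_b-(l+1)$, with the all-even case rescued by $F_s(0)=F_s^{\vee}(0)=0$ --- is a genuinely elementary argument that does not appear in the paper. But it cannot carry the load you assign it: your cycle still hinges on $(iv)\Rightarrow(i)$, whereas the paper's entire proof effort goes into $(iv)\Rightarrow(v)$ via the congruence-type factorization $\mathscr H_{\infty}(R_F)=-C_{\infty}(\mathscr P[F_d])^{*}J\,\mathbf H_1^{\mathscr S}J\,C_{\infty}(\mathscr P[F_d])$ assembled from Propositions \ref{ProHinfty}, \ref{ProHscrH} and \ref{LemHs}, from which Hermitianness of every ${\bf s}_k$ ($k\ge 1$) drops out at once because $M^{*}XM$ is Hermitian whenever $X$ is. Your proposed replacement for this step is exactly where the gap lies, and you half-admit it.

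The flaw is that ``pinned by one mechanism or the other'' is a non sequitur. For $k\le r_a+r_b-1$ the window from $\mathbf H_1^{\mathscr S}$ does kill $(E_k)_{a,b}$, as you say; but for the first uncovered index $k=r_a+r_b$ the column recurrence from \eqref{S-A} only \emph{determines} ${\bf s}_k(A_0)_{\cdot b}$, with coefficients $(A_\iota)_{\cdot b}$ that are arbitrary complex data, while ${\bf s}_k^{*}$ obeys the \emph{row} recurrence of the left fraction $(F_d^{\vee})^{-1}F_s^{\vee}$ --- a different recurrence. Granting $E_0=\cdots=E_{k-1}=0$, the equality of the two outputs at entry $(a,b)$ reduces to
\begin{equation*}
\sum_{\iota\ge 1}\pm\,(A_0)^{*}_{\cdot a}\,{\bf s}_{k-\iota}\,(A_\iota)_{\cdot b}
=\sum_{\iota\ge 1}\pm\,(A_\iota)^{*}_{\cdot a}\,{\bf s}_{k-\iota}\,(A_0)_{\cdot b},
\end{equation*}
a shifted bilinear identity that does \emph{not} follow from the induction hypothesis; it is precisely the Hankel shift-symmetry that the companion-matrix factorization encodes, and proving it bare-handed amounts to re-proving $F_d^{\vee}F_s=F_s^{\vee}F_d$, i.e.\ statement (v) itself. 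The scalar case already exposes this: for $p=1$, all column degrees even, (iv) says $s_0,\dots,s_{2m-1}\in\R$, and the recurrence $s_k=\sum_{\iota=1}^{m}c_\iota s_{k-\iota}$ has \emph{complex} $c_\iota$, so realness of $s_{2m}$ is not a formal consequence of feeding real data into the recurrence --- it rests on hidden relations between the window and the $c_\iota$ (visible even at $m=1$, where $s_1=s_0\,\alpha_2/\alpha_0$ forces $\alpha_2/\alpha_0\in\R$ only when $s_0\neq 0$). A secondary defect: recovering individual columns of ${\bf s}_k$ from the quantities ${\bf s}_k(A_0)_{\cdot b}$ by your triangular solve needs the recurrence available for \emph{all} $b'\le b$, which fails when an earlier column in the triangular order has larger degree ($r_{b'}\ge k$); column degrees need not be monotone. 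To repair the proof you must either carry out the paper's infinite-companion congruence or establish the displayed shift identity directly from (iv); your sketch does neither, so the implication $(iv)\Rightarrow(i)$ --- the one piece of real content in the proposition --- remains unproven.
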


\begin{proof}
The implications ``(i)$\Longrightarrow$(ii)'', ``(ii)$\Longrightarrow$(iii)'', ``(iii)$\Longrightarrow$(iv)'' and ``(v)$\Longrightarrow$(i)'' are direct.

The proof for the implication ``(iv)$\Longrightarrow$(v)'': Suppose that the statement (iv) holds.
Proposition \ref{ProCinftyD} yields that
$$
\wtilde C_{\infty}(\overline{\mathscr P[F_{d}]})=\Delta(\overline{\mathscr P[F_{d}]})\mathscr{H}_{\infty}\big((\overline{\mathscr P[F_{d}]})^{-1}\big)=\overline{\wtilde C_{\infty}(\mathscr P[F_{d}])}.
$$
It follows that
\begin{equation}\label{CTT}
C_{\infty}\big(((\mathscr P[F_{d}])^{\vee})^T\big)^T=C_{\infty}(\overline{\mathscr P[F_{d}]})^T=C_{\infty}(\mathscr P[F_{d}])^*.
\end{equation}
Then, by denoting that $J:={\rm diag}(I_{\left|\mathcal I_0\right|},-I_{\left|\mathcal I_1\right|},\cdots,(-1)^{m-1}I_{\left|\mathcal I_{m-1}\right|})$, we obtain
\begin{align*}
\mathscr{H}_{\infty}(R_F)&=C_{\infty}(\mathscr P[F_d])^* 
\mathscr{H}((\mathscr P[F_{d}])^{\vee},(\mathscr U[F_{d}])^{\vee} F_s^{\vee};F_s \mathscr U[F_d],\mathscr P[F_d])
C_{\infty}(\mathscr P[F_d])\\
&=C_{\infty}(\mathscr P[F_d])^* 
\mathbf{H}(F^{\vee}_{d},F^{\vee}_{s};F_{s},F_{d})
C_{\infty}(\mathscr P[F_d])\\
&=-C_{\infty}(\mathscr P[F_d])^*J 
\mathbf{H}_{1}^{\mathscr S}J
C_{\infty}(\mathscr P[F_d]),
\end{align*} 
where the first equality is due to Proposition \ref{ProHinfty} and \eqref{CTT}, the second is due to Proposition  \ref{ProHscrH} and the last is due to Proposition \ref{LemHs} and Remark \ref{RemIndex} (ii).
Thus, the Hermitian property of $\mathbf{H}_{1}^{\mathscr S}$ implies that of any principal submatrix of $\mathbf{H}_{\infty}(R_F)$. Therefore, the statement (v) holds (see, e.g., \cite{Rudin}).
\end{proof} 

The following result establishes a structural bridge between the inertia of column reduced matrix polynomials and the finite B\'ezoutian matrix.

\begin{lemma}\label{lambdaoutInertia} 
Let $L(\lambda),L_1(\lambda)\in \mathbb C[\lambda]^{p\times p}$ be column reduced such that
\begin{equation*}
L^{\vee}_1(\lambda)L_1(\lambda)=L^{\vee}(\lambda)L(\lambda). 
\end{equation*}
    Then
\begin{align}
\gamma_+(L)&=\pi(-{\rm i} {\bf B}(L_1^{\vee},L_1;L^{\vee},L)) + \gamma_+(L_0), \nonumber  \\
\gamma_-(L)&=\nu(-{\rm i} {\bf B}(L_1^{\vee},L_1;L^{\vee},L)) + \gamma_-(L_0), \nonumber   \\
\gamma_{0}(L)&=\delta(-{\rm i} {\bf B}(L_1^{\vee},L_1;L^{\vee},L))-\gamma_+(L_0)-\gamma_-(L_0), \label{gamma0}
\end{align}
where~$L_0(\lambda)$ is a GCRD of~$L(\lambda)$ and~$L_1(\lambda)$.
\end{lemma}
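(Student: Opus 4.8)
The plan is to prove the three identities by first making the Bézoutian Hermitian, then settling the right-coprime case via the classical Hermite-type eigenvalue-separation theory, and finally reinstating the common divisor by a congruence argument. Throughout I write $n := \deg\det L = \deg\det L_1$ (the two agree since $\det L_1^\vee\det L_1 = \det L^\vee\det L$ and $\deg\det L^\vee = \deg\det L$), so that $\mathbf B := \mathbf B(L_1^\vee,L_1;L^\vee,L)$ is $n\times n$. \textbf{Step 1 (Hermitian-ness).} I would first record that $-{\rm i}\mathbf B$ is Hermitian. Setting $N(\lambda,\mu):=L_1^\vee(\lambda)L_1(\mu)-L^\vee(\lambda)L(\mu)$ and using $L^\vee(\lambda)=L(\bar\lambda)^*$ together with the defining relation $L_1^\vee L_1=L^\vee L$, a direct computation gives $N(\lambda,\mu)^*=N(\bar\mu,\bar\lambda)$, whence the generating function $B(\lambda,\mu)=N(\lambda,\mu)/(\lambda-\mu)$ obeys $B(\lambda,\mu)^*=-B(\bar\mu,\bar\lambda)$. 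Comparing block coefficients yields $\mathbf B^*=-\mathbf B$, so $-{\rm i}\mathbf B=(-{\rm i}\mathbf B)^*$. I must also check that the \emph{finite} Bézoutian is genuinely square and that this skew-symmetry survives the compression to the index set $\mathcal I_r(L_1^\vee)\times\mathcal I_c(L)$; this reduces to verifying $\mathcal I_c(L_1)=\mathcal I_c(L)$, which I would extract from the equality of highest-column-degree data forced by $L_1^\vee L_1=L^\vee L$.

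\textbf{Step 2 (coprime heart).} Assume first that $L$ and $L_1$ are right coprime, so a GCRD $L_0$ is unimodular and $\gamma_\bullet(L_0)=0$; the target then reduces to $\In(-{\rm i}\mathbf B)=(\gamma_+(L),\gamma_-(L),\gamma_0(L))$. I would obtain this from the matrix-polynomial eigenvalue-separation theory of \cite{LT82} (see also \cite{LRT}): the rational matrix $W:=L_1L^{-1}=(L^\vee)^{-1}L_1^\vee$ is symmetric with respect to $\mathbb R$ (indeed $W(\bar\lambda)^*=W(\lambda)$), and $\mathbf B$ is the Bézoutian attached to it, whose Hermitian signature counts the finite eigenvalues of $L$ by their position relative to the separating axis. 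The factor $-{\rm i}$ rotates ``upper/lower half-plane'' into ``positive/negative'', matching $\pi\leftrightarrow\gamma_+$ and $\nu\leftrightarrow\gamma_-$; coprimeness makes the non-real part contribute a nonsingular block, while the real eigenvalues of $L$ produce exactly the nullity $\delta=\gamma_0(L)$. A self-contained alternative I would keep in reserve is a residue representation of $-{\rm i}\mathbf B$ over the spectrum of a linearization of $L$, showing each eigenvalue in the open upper (resp. lower) half-plane contributes a positive (resp. negative) Hermitian rank-block and each real eigenvalue a null direction.

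\textbf{Step 3 (reinstating the GCRD and assembling).} For the general case I factor $L=\widehat L L_0$ and $L_1=\widehat L_1 L_0$ with $\widehat L,\widehat L_1$ right coprime; cancelling $L_0^\vee(\,\cdot\,)L_0$ in $L_1^\vee L_1=L^\vee L$ yields $\widehat L_1^\vee\widehat L_1=\widehat L^\vee\widehat L$. Using $(\widehat L_1 L_0)^\vee=L_0^\vee\widehat L_1^\vee$, the generating function factors as $B(\lambda,\mu)=L_0^\vee(\lambda)\,\widehat B(\lambda,\mu)\,L_0(\mu)$, where $\widehat B$ generates $\mathbf B(\widehat L_1^\vee,\widehat L_1;\widehat L^\vee,\widehat L)$. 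At the matrix level this becomes a congruence $\mathbf B=M^*\,\mathbf B(\widehat L_1^\vee,\widehat L_1;\widehat L^\vee,\widehat L)\,M$ through the multiplication-by-$L_0$ map $M$, which has full row rank because $L_0$ is regular; Sylvester's law then gives $\In(-{\rm i}\mathbf B)=\In(-{\rm i}\mathbf B(\widehat L_1^\vee,\widehat L_1;\widehat L^\vee,\widehat L))+(0,0,\deg\det L_0)$. Combining this with Step 2 applied to $\widehat L$, the eigenvalue additivity $\gamma_\bullet(L)=\gamma_\bullet(\widehat L)+\gamma_\bullet(L_0)$, and $\deg\det L_0=\gamma_+(L_0)+\gamma_-(L_0)+\gamma_0(L_0)$, produces the three displayed identities, including the subtraction $\gamma_0(L)=\delta(-{\rm i}\mathbf B)-\gamma_+(L_0)-\gamma_-(L_0)$.

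I expect the main obstacle to be Step 2, the genuine spectral content identifying the Hermitian signature of $-{\rm i}\mathbf B$ with the half-plane eigenvalue counts of $L$, since the remaining steps are structural. A secondary difficulty, which I would treat carefully, is making the Step 3 congruence rigorous at the level of the \emph{finite} (compressed) Bézoutians: I must verify that the index sets entering the compression are compatible with the factorization $B=L_0^\vee\widehat B L_0$, and that the compressed multiplication map indeed has full row rank with kernel of dimension exactly $\deg\det L_0$.
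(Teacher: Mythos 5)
Your overall route is the paper's route --- both proofs ultimately rest on the Lerer--Tismenetsky inertia theorem --- but there is one genuine gap: you never account for the eigenvalue at infinity, which is exactly the nontrivial conversion in the nonmonic setting. The paper's proof is two lines: apply \cite[Theorem 2.1]{LT82} to the \emph{full} Anderson--Jury B\'ezoutian ${\bf B}_{n,n}(L_1^{\vee},L_1;L^{\vee},L)$ with $n=\max\{\deg L,\deg L_1\}$ (that theorem already contains the GCRD terms $\gamma_{\pm}(L_0)$, so your entire Step 3 re-derives what the citation gives for free), and then descend to the finite B\'ezoutian. The $\delta$-identity delivered by the citation is
\begin{equation*}
\gamma_{0}(L)+\gamma_{\infty}(L)=\delta\bigl(-{\rm i}\,{\bf B}_{n,n}(L_1^{\vee},L_1;L^{\vee},L)\bigr)-\gamma_+(L_0)-\gamma_-(L_0),
\end{equation*}
and since $L$ need not be monic, $\gamma_{\infty}(L)$ is in general strictly positive. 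Passing to ${\bf B}(L_1^{\vee},L_1;L^{\vee},L)$ deletes exactly the $np-\deg\det L$ zero rows and $np-\deg\det L$ zero columns outside $\mathcal I_r(L_1^{\vee})$ and $\mathcal I_c(L)$, which lowers the nullity by that amount and fixes $\pi$ and $\nu$; the books balance only because $\gamma_{\infty}(L)=np-\deg\det L$ (the paper cites \cite{GHN} for this). Your Step 2 asserts $\delta(-{\rm i}{\bf B})=\gamma_0(L)$ in the coprime case as a direct consequence of \cite{LT82}, but that theory concerns the uncompressed $np\times np$ B\'ezoutian, whose nullity contains the extra summand $\gamma_{\infty}(L)$; without this bookkeeping your nullity claim does not follow from the citation, and infinite eigenvalues appear nowhere in your proposal.

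Two secondary points. First, in Step 2 the identity $W:=L_1L^{-1}=(L^{\vee})^{-1}L_1^{\vee}$ and the symmetry claim $W(\bar\lambda)^*=W(\lambda)$ are both false: from $L_1^{\vee}L_1=L^{\vee}L$ one gets $LL_1^{-1}=(L^{\vee})^{-1}L_1^{\vee}$, hence $W(\bar\lambda)^*=W(\lambda)^{-1}$, i.e.\ $W$ takes \emph{unitary}, not Hermitian, values on $\R$ (the scalar case $L=\lambda-{\rm i}$, $L_1=\lambda+{\rm i}$ already shows this); the rational matrix symmetric with respect to $\R$ here is the Cayley-inverse $-{\rm i}(L_1-L)(L_1+L)^{-1}$, which in the paper's application is $\wtilde S\wtilde D^{-1}$. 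Since the citation carries the real weight, this is repairable, but as written the justification is wrong. Second, your Step 3 congruence faces an obstruction beyond those you flag: the finite B\'ezoutian ${\bf B}(\widehat L_1^{\vee},\widehat L_1;\widehat L^{\vee},\widehat L)$ is only defined when $\widehat L_1^{\vee}$ is row reduced and $\widehat L$ is column reduced, and the coprime quotients $\widehat L=LL_0^{-1}$, $\widehat L_1=L_1L_0^{-1}$ need not inherit column reducedness; together with the truncation issue when $\deg L<\deg\widehat L+\deg L_0$, this makes Step 3 far heavier than the citation it replaces. On the positive side, your Step 1 is correct and slightly more careful than the paper, which leaves implicit the Hermitianness of $-{\rm i}{\bf B}$, the squareness via $\deg\det L_1=\deg\det L$, and the equality $\mathcal I_c(L_1)=\mathcal I_c(L)$ (your diagonal-entry degree argument for the latter is sound).
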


\begin{proof}  In view of \cite[Theorem 2.1]{LT82}, we have
\begin{align}
\gamma_+(L)&=\pi(-{\rm i} {\bf B}_{n,n}(L_1^{\vee},L_1;L^{\vee},L)) + \gamma_+(L_0), \nonumber  \\
\gamma_-(L)&=\nu(-{\rm i} {\bf B}_{n,n}(L_1^{\vee},L_1;L^{\vee},L)) + \gamma_-(L_0),  \nonumber  \\
\gamma_{0}(L)+\gamma_{\infty}(L)&=\delta(-{\rm i} {\bf B}_{n,n}(L_1^{\vee},L_1;L^{\vee},L))-\gamma_+(L_0)-\gamma_-(L_0).  \label{gamma0'}
\end{align}
where $n\coloneqq\max\{\deg L, \deg L_1\}$.
Note that (see, e.g., \cite{GHN})
\begin{equation}\label{gammainfty}
\gamma_{\infty}(L)=np-\deg \det L.
\end{equation}
Then a combination of \eqref{gamma0'} and \eqref{gammainfty} yields \eqref{gamma0}.

\end{proof}

\begin{lemma}\label{LemIndexDFd} Let $F(\lambda)\in\mathbb C[\lambda]^{p\times p}$ be column reduced with the dominant part $F_d(\lambda)$. Further, let
\begin{equation}\label{wtildeD}
\wtilde D(\lambda):= F_d(-\lambda^2)\alpha({\rm i}\lambda),
\end{equation}
where $\alpha(\lambda)$ is given as in \eqref{alphaz}.
Then, for $j=0,\ldots,\deg F_d,$
$$
\mathcal I_{2j+1}^c(\wtilde D)=\mathcal I_{j},\quad \mathcal I_{2j}^c(\wtilde D)=\wtilde {\mathcal I}_j.
$$
\end{lemma}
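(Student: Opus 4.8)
The plan is to reduce the statement to a column-by-column degree count. First I would record how the Li\'enard--Chipard splitting relates the column degrees of $F_d$ to those of $F$: for each $k$ the dominant part $d_k(\lambda)$ is the even part of $f_k(\lambda)$ when ${\rm cdeg}_k F$ is even and the odd part when ${\rm cdeg}_k F$ is odd, whence
\begin{equation*}
{\rm cdeg}_k F_d=\Big\lfloor \tfrac{{\rm cdeg}_k F}{2}\Big\rfloor,\qquad\text{i.e.,}\qquad {\rm cdeg}_k F=\begin{cases}2\,{\rm cdeg}_k F_d,& {\rm cdeg}_k F\text{ even},\\ 2\,{\rm cdeg}_k F_d+1,& {\rm cdeg}_k F\text{ odd}.\end{cases}
\end{equation*}
Here the coefficient of $\lambda^{{\rm cdeg}_k F_d}$ in $d_k(\lambda)$ is the $k$-th column of $(F_d)_{\mathrm{hcdc}}=F_{\mathrm{hcdc}}$, which is nonzero, so these degrees are exact rather than upper bounds.

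Next I would compute the column degrees of $\wtilde D(\lambda)=F_d(-\lambda^2)\alpha({\rm i}\lambda)$ directly, using that the $k$-th column of the product is $d_k(-\lambda^2)\,a_k({\rm i}\lambda)$. The substitution $\lambda\mapsto-\lambda^2$ preserves the leading term of $d_k$, so $d_k(-\lambda^2)$ has exact degree $2\,{\rm cdeg}_k F_d$; multiplying by the diagonal factor $a_k({\rm i}\lambda)$ then gives
\begin{equation*}
{\rm cdeg}_k \wtilde D=\begin{cases}2\,{\rm cdeg}_k F_d,& {\rm cdeg}_k F\text{ even }(a_k({\rm i}\lambda)=1),\\ 2\,{\rm cdeg}_k F_d+1,& {\rm cdeg}_k F\text{ odd }(a_k({\rm i}\lambda)={\rm i}\lambda).\end{cases}
\end{equation*}
(Equivalently, $\wtilde D(\lambda)=D({\rm i}\lambda)$ with $D$ as in \eqref{DS}, so rescaling the variable forces ${\rm cdeg}_k\wtilde D={\rm cdeg}_k F$; I would keep the form above since the target index sets are phrased through ${\rm cdeg}_k F_d$ and the parity of ${\rm cdeg}_k F$.)

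With these formulas both identities follow by comparing the strict inequality defining $\mathcal I_i^c(\wtilde D)$ with integer inequalities on ${\rm cdeg}_k F_d$, splitting on the parity of ${\rm cdeg}_k F$. For the odd level $2j+1$, the condition $2j+1<{\rm cdeg}_k\wtilde D$ reduces to ${\rm cdeg}_k F_d>j$ in both parities (in the even case $2\,{\rm cdeg}_k F_d>2j+1$ forces ${\rm cdeg}_k F_d\ge j+1$; in the odd case $2\,{\rm cdeg}_k F_d+1>2j+1$ gives ${\rm cdeg}_k F_d>j$), so $\mathcal I_{2j+1}^c(\wtilde D)=\{k:{\rm cdeg}_k F_d>j\}=\mathcal I_j$. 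For the even level $2j$, the condition $2j<{\rm cdeg}_k\wtilde D$ reduces to ${\rm cdeg}_k F_d>j$ in the even-parity case, contributing exactly $\mathcal I_j^e$, and to ${\rm cdeg}_k F_d\ge j$, i.e.\ ${\rm cdeg}_k F_d>j-1$, in the odd-parity case, contributing exactly $\mathcal I_{j-1}^o$; taking the union yields $\mathcal I_{2j}^c(\wtilde D)=\mathcal I_j^e\cup\mathcal I_{j-1}^o=\wtilde{\mathcal I}_j$.

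I expect the only delicate point to be this half-integer bookkeeping: the asymmetry between the even- and odd-parity cases is precisely what produces the index shift ``$\mathcal I_{j-1}^o$'' in the second identity, so the main thing to get right is the translation between the strict inequality in the definition of $\mathcal I_i^c(\wtilde D)$ and the integer inequalities on ${\rm cdeg}_k F_d$. I would also verify the boundary index $j=\deg F_d$, where $\mathcal I_{2\deg F_d+1}^c(\wtilde D)$ comes out empty in agreement with $\mathcal I_{\deg F_d}$, and note that the upper-bound constraint ${\rm cdeg}_k\wtilde D\le\deg\wtilde D$ in the definition is automatically satisfied and hence does not interfere.
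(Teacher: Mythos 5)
Your proposal is correct and follows essentially the same route as the paper's proof: both rest on the degree identity ${\rm cdeg}_k \wtilde D = 2\,{\rm cdeg}_k F_d + b_k$ (with $b_k\in\{0,1\}$ recording the parity of ${\rm cdeg}_k F$), and then translate the strict integer inequalities defining $\mathcal I^c_{2j+1}(\wtilde D)$ and $\mathcal I^c_{2j}(\wtilde D)$ case-by-case in the parity, yielding $\mathcal I_j$ and $\mathcal I_j^e\cup\mathcal I_{j-1}^o=\wtilde{\mathcal I}_j$, respectively. Your additional care about the exactness of the column degrees (via the nonsingularity of $F_{\mathrm{hcdc}}=(F_d)_{\mathrm{hcdc}}$) and the boundary case $j=\deg F_d$ only makes explicit what the paper leaves implicit.
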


\begin{proof}
Note that 
\begin{equation}\label{DFd}
{\rm cdeg}_k\wtilde D=2{\rm cdeg}_kF_d+b_k,
\end{equation}
where 
$$
b_k:=\begin{cases}
0, & {\rm cdeg}_kF  {\rm\ is\ even},\\
1, & {\rm cdeg}_kF  {\rm\ is\ odd}.
\end{cases}
$$
It follows that
$$
\mathcal I_{2j+1}^c(\wtilde D)=\{k : j<{\rm cdeg}_kF_d+\frac{b_k-1}{2}\}=\mathcal I_{j}.
$$

Due to \eqref{DFd} again we obtain that 
\begin{align*}
\mathcal I^e_{j}&=\{k  : 2j<{\rm cdeg}_k\wtilde D,b_k=0\},\\
 \mathcal I_{j-1}^o&=\{k  : j-1<{\rm cdeg}_kF_d,b_k=1\}
 =\{k : 2j<{\rm cdeg}_k\wtilde D,b_k=1\}.
\end{align*}
Subsequently, 
$$
\mathcal I_{2j}^c(\wtilde D)=\mathcal I_j^e\cup \mathcal I_{j-1}^o=\wtilde {\mathcal I}_j.
$$
\end{proof}

\begin{theorem}\label{ThmInertia}
   Let $F(\lambda)\in\mathbb C[\lambda]^{p\times p}$ of nonzero degree with the nonsingular upper triangular $F_{\mathrm{hcdc}}$ and the Hermitian sequence of Markov parameters $\mathscr S$. 
Further let $
\mathbf{H}_{0}^{\mathscr S}$ and $\mathbf{H}_{1}^{\mathscr S}$ be defined as in \eqref{HankelMatrix}--\eqref{HankelMatrixII}. Then
      \begin{align*}
        \gamma'_-(F)&=\pi(\mathbf{H}_{0}^{\mathscr S})+\pi(\mathbf{H}_{1}^{\mathscr S}) +\gamma'_-(G),\\
        \gamma'_+(F)&=\nu(\mathbf{H}_{0}^{\mathscr S})+\nu(\mathbf{H}_{1}^{\mathscr S}) + \gamma'_+(G),\\
        \gamma'_0(F)&=\delta(\mathbf{H}_{0}^{\mathscr S})+\delta(\mathbf{H}_1^{\mathscr S})-\gamma'_+(G)- \gamma'_-(G),
      \end{align*}
where $G(\lambda)$ is a GCRD of~$D(\lambda)$ and~$S(\lambda)$ given as in \eqref{DS}.
\end{theorem}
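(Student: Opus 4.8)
The plan is to rotate the problem onto the real axis and then invoke Lemma \ref{lambdaoutInertia}. Put $\wtilde F(\lambda)\coloneqq F({\rm i}\lambda)$; its finite spectrum is $\{-{\rm i}\mu:\mu\in\sigma(F)\}$, and since $\mu\mapsto-{\rm i}\mu$ sends the open left (resp.\ right) half-plane onto the open upper (resp.\ lower) half-plane and ${\rm i}\mathbb R$ onto $\mathbb R$, the column-reducedness of $F$ (which preserves degrees and $\deg\det$, so no spurious eigenvalue at infinity appears) gives $\gamma'_-(F)=\gamma_+(\wtilde F)$, $\gamma'_+(F)=\gamma_-(\wtilde F)$ and $\gamma'_0(F)=\gamma_0(\wtilde F)$. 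Writing $D,S$ as in \eqref{DS}, the same substitution yields $\wtilde F=\wtilde D+\wtilde S$ with $\wtilde D(\lambda)=D({\rm i}\lambda)=F_d(-\lambda^2)\alpha({\rm i}\lambda)$ exactly the polynomial \eqref{wtildeD} and $\wtilde S(\lambda)=S({\rm i}\lambda)$. I would then apply Lemma \ref{lambdaoutInertia} to $L\coloneqq\wtilde F$ and $L_1\coloneqq\wtilde D-\wtilde S$. Its hypothesis $L_1^{\vee}L_1=\wtilde F^{\vee}\wtilde F$ expands to $\wtilde D^{\vee}\wtilde S+\wtilde S^{\vee}\wtilde D=0$; using $\wtilde D^{\vee}(\lambda)=D^{\vee}(-{\rm i}\lambda)$, $\wtilde S^{\vee}(\lambda)=S^{\vee}(-{\rm i}\lambda)$ together with the parities $D(-\lambda)=D(\lambda)\epsilon$ and $S(-\lambda)=-S(\lambda)\epsilon$ (where $\epsilon$ is the diagonal matrix with $k$-th entry $+1$ or $-1$ according as ${\rm cdeg}_kF$ is even or odd), this collapses to $D^{\vee}S=S^{\vee}D$, which is precisely the $\mathbb R$-symmetry of $\rho(\lambda)\coloneqq S(\lambda)D(\lambda)^{-1}=\lambda^{-1}R_F(\lambda^2)$ guaranteed by the Hermitian $\mathscr S$ via Proposition \ref{ProSelfAdjoint}(v).

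Lemma \ref{lambdaoutInertia} then writes each component of $\gamma(\wtilde F)$ as the matching component of $\In\!\big(-{\rm i}\,{\mathbf B}(L_1^{\vee},L_1;\wtilde F^{\vee},\wtilde F)\big)$ plus that of $\gamma(\wtilde G)$, where $\wtilde G$ is a GCRD of $\wtilde F$ and $L_1$. Since $\wtilde F,L_1$ have the same common right divisors as $\wtilde D,\wtilde S$ and $\wtilde D=D({\rm i}\cdot)$, $\wtilde S=S({\rm i}\cdot)$, one has $\wtilde G=G({\rm i}\cdot)$ for a GCRD $G$ of $D,S$, whence $\gamma_+(\wtilde G)=\gamma'_-(G)$, and so on; these account for the $\gamma'(G)$-terms in the statement. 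It therefore remains to prove $\In\!\big(-{\rm i}\,{\mathbf B}(L_1^{\vee},L_1;\wtilde F^{\vee},\wtilde F)\big)=\In(\mathbf H_0^{\mathscr S})+\In(\mathbf H_1^{\mathscr S})$ componentwise.

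For this I would undo the rotation at the level of the Anderson--Jury kernel: substituting $\lambda={\rm i}u$, $\mu=-{\rm i}v$ is a congruence by the unitary phase $\Phi={\rm diag}({\rm i}^kI_p)$, and converts $-{\rm i}\,{\mathbf B}(L_1^{\vee},L_1;\wtilde F^{\vee},\wtilde F)$ into a Hermitian matrix whose generating kernel is built from the cross terms $D^{\vee}(u)S(v)$ and $S^{\vee}(u)D(v)$; the reflection $v\mapsto-v$ together with the parities of $D,S$ turns this into the difference kernel of the finite B\'ezoutian ${\mathbf B}(D^{\vee},S;S^{\vee},D)$. The B\'ezoutian--Hankel factorization underlying Proposition \ref{LemBH} (legitimate because $\rho$ is $\mathbb R$-symmetric) then identifies its inertia with that of the finite Hankel ${\mathbf H}(D^{\vee},S^{\vee};S,D)$. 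By Proposition \ref{LemHs} and Remark \ref{RemIndex}(ii) the $(i,j)$ block of the latter is $(\mathbf c_{i+j})_{\mathcal I^c_i(D)}^{\mathcal I^c_j(D)}$, where $\mathbf c_r$ is the coefficient of $\lambda^{-r-1}$ in $\rho$; because $\rho(\lambda)=\sum_{k\ge0}(-1)^k\lambda^{-2k-1}\mathbf s_k$ one has $\mathbf c_r=0$ for odd $r$ and $\mathbf c_r=(-1)^{r/2}\mathbf s_{r/2}$ for even $r$, so the Hankel is checkerboard and the even- and odd-indexed layers decouple. Lemma \ref{LemIndexDFd} gives $\mathcal I^c_{2j}(D)=\wtilde{\mathcal I}_j$ and $\mathcal I^c_{2j+1}(D)=\mathcal I_j$, so a permutation collecting the even layers and then the odd layers block-diagonalizes the Hankel, the even block being a signature transform of $\mathbf H_0^{\mathscr S}=(\wtilde{\mathbf h}_{i,j})$ and the odd block of $\mathbf H_1^{\mathscr S}=(\mathbf h_{i,j})$, the index ranges of \eqref{HankelMatrix}--\eqref{HankelMatrixII} corresponding to whether $\deg F=2m$ or $2m+1$. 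Since permutation and signature congruences preserve inertia, combining this with the displays of the previous paragraph yields the three asserted equalities.

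The hard part will be the bookkeeping of signs and scalar factors through the rotation. The factor $-{\rm i}$, the phase $\Phi$, the reflection $v\mapsto-v$ and the parity matrix $\epsilon$ must be tracked simultaneously, and each intermediate map has to be verified to be a genuine \emph{congruence} rather than a mere equivalence, since only the former preserves inertia. In particular the naive reduction produces $\mathbf H_1^{\mathscr S}$ with a flipped sign, and one must check that the accompanying scalar and the non-constant signature coming from $\epsilon$ restore the correct orientation, so that positive and negative indices land in $\pi$ and $\nu$ exactly as stated. Confirming that the whole chain is inertia-preserving, and that the ``sum-type'' kernel produced by the rotation genuinely becomes the ``difference-type'' kernel governed by Proposition \ref{LemBH}, is where the real work lies.
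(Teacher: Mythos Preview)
Your high-level plan --- rotate $F$ onto the real axis, apply Lemma \ref{lambdaoutInertia} to the pair $(L,L_1)$, then show the resulting B\'ezoutian is congruent to $\mathbf H_0^{\mathscr S}\oplus\mathbf H_1^{\mathscr S}$ via Proposition \ref{LemBH}, Proposition \ref{LemHs} and Lemma \ref{LemIndexDFd} --- is exactly the paper's strategy, and your verification of the hypothesis $L_1^{\vee}L_1=L^{\vee}L$ and of the GCRD correspondence is correct.

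Where you diverge is in the choice of the pair to which the B\'ezoutian--Hankel machinery is applied. You pass back to the \emph{unrotated} pair $(D,S)$ via the substitution $\lambda={\rm i}u,\ \mu=-{\rm i}v$ followed by a reflection $v\mapsto -v$; this produces a sum-type kernel $\frac{1}{u+v}(\cdots)$ that must then be turned into a difference kernel using the parities $D(-\lambda)=D(\lambda)\epsilon$, $S(-\lambda)=-S(\lambda)\epsilon$. As you yourself note, the resulting chain involves the phase $\Phi$, the signature $\Sigma$, the scalar $-2$, and the column sign matrix $\epsilon$ simultaneously, and each link must be checked to be a genuine congruence. This can be carried out, but it is the laborious route.

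The paper avoids this entirely by a small but decisive normalisation: it sets $\wtilde S(\lambda):={\rm i}\,S({\rm i}\lambda)$ (not $S({\rm i}\lambda)$), so that $L=\wtilde D-{\rm i}\wtilde S$ and $L_1=\wtilde D+{\rm i}\wtilde S$ form a conjugate pair. A two-line expansion of $L_1^{\vee}(\lambda)L_1(\mu)-L^{\vee}(\lambda)L(\mu)$ then gives the \emph{equality} $-{\rm i}\,{\mathbf B}(L_1^{\vee},L_1;L^{\vee},L)=2\,{\mathbf B}(\wtilde D^{\vee},\wtilde S;\wtilde S^{\vee},\wtilde D)$, with no congruence to track. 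Moreover $\wtilde S\wtilde D^{-1}=\lambda^{-1}R_F(-\lambda^2)=\sum_{k\ge0}\lambda^{-2k-1}\mathbf s_k$, so the Laurent coefficients carry \emph{no} alternating signs; the checkerboard Hankel ${\mathbf H}(\wtilde D^{\vee},\wtilde S^{\vee};\wtilde S,\wtilde D)$ already has blocks $(\mathbf s_{(i+j)/2})_{\mathcal I_i^c(\wtilde D)}^{\mathcal I_j^c(\wtilde D)}$, and after the permutation of Lemma \ref{LemIndexDFd} it is literally $\mathbf H_0^{\mathscr S}\oplus\mathbf H_1^{\mathscr S}$, without any signature fix. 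In short, the ``hard part'' you flag disappears once you work with $(\wtilde D,\wtilde S)$ rather than $(D,S)$.
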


\begin{proof}
We provide a proof only for the case where all column degrees of $F(\lambda)$ are even; the other case follows by analogous arguments.

Let $F(\lambda)$ be of nonzero degree $2m$ or $2m+1$ and let 
$L(\lambda):=F({\rm i}\lambda)$.
Clearly,
$
L(\lambda)=\wtilde D(\lambda)-{\rm i}\wtilde S(\lambda)
$
where $\wtilde D(\lambda)$ is given as in \eqref{wtildeD} and 
$
\wtilde S(\lambda):={\rm i}S({\rm i}\lambda)= \lambda^{-1} F_s(-\lambda^2)\alpha({\rm i}\lambda).
$ Further let
$
L_1(\lambda):=\wtilde D(\lambda)+{\rm i} \wtilde S(\lambda).
$
Due to Proposition \ref{ProSelfAdjoint}, $R_F(\lambda)$ is symmetric with respect to $\mathbb R$. Then ${\bf B}(L_1^{\vee},L_1;L^{\vee},L)$ and ${\bf B}(\wtilde D^{\vee},\wtilde S;\wtilde S^{\vee},\wtilde D)$ are both well-defined and, noting that $\mathcal I_c(L)=\mathcal I_c(\wtilde D)$, their relation is clear that
 \begin{equation}\label{-BB}
 -{\rm i}{\bf B}(L_1^{\vee},L_1;L^{\vee},L)=2{\bf B}(\wtilde D^{\vee},\wtilde S;\wtilde S^{\vee},\wtilde D).
 \end{equation}

Let $\mathscr S=({\bf s}_k)_{k=0}^{l}$. Note that $\wtilde S(\lambda)(\wtilde D(\lambda))^{-1}$ is strictly proper. 
In view of Remark \ref{RemIndex} and Proposition \ref{LemHs}, we obtain  the structure of $\mathbf{H}(\wtilde D^{\vee},\wtilde S^{\vee};\wtilde S,\wtilde D)$ as
\begin{equation*}
  \begin{bmatrix} {\bf s}_{0,0} & 0 & {\bf s}_{0,2} & \cdots & {\bf s}_{0,2m-2} & 0 \\
        0 &  {\bf s}_{1,1} & 0 & \cdots &0 & {\bf s}_{1,2m-1}  \\
        {\bf s}_{2,0} & 0 & {\bf s}_{2,2} & \cdots & {\bf s}_{2,2m-2} & 0\\
        \vdots & \vdots & \vdots && \vdots & \vdots\\
        {\bf s}_{2m-2,0} & 0 & {\bf s}_{2m-2,2} & \cdots & {\bf s}_{2m-2,2m-2} & 0\\
        0 &  {\bf s}_{2m-1,1} & 0 & \cdots & 0 &  {\bf s}_{2m-1,2m-1}
\end{bmatrix},
\end{equation*}
where ${\bf s}_{i,j}:=\big({\bf s}_{\frac{i+j}{2}}\big)_{\mathcal I^c_i(\wtilde D)}^{\mathcal I^c_j(\wtilde D)}\in |\mathcal I^c_i(\wtilde D)|\times |\mathcal I^c_j(\wtilde D)|$.
From Lemma \ref{LemIndexDFd} one can rewrite $\mathbf{H}(\wtilde D^{\vee},\wtilde S^{\vee};\wtilde S,\wtilde D)$ into the block form
\begin{equation*}
  \begin{bmatrix} \wtilde {\bf h}_{0,0} & 0 & \wtilde {\bf h}_{0,1} & \cdots & \wtilde {\bf h}_{0,m-1} & 0 \\
       0  &  {\bf h}_{0,0} & 0 & \cdots & 0 & {\bf h}_{0,m-1} \\
        \wtilde {\bf h}_{1,0} & 0 & \wtilde {\bf h}_{1,1} & \cdots & \wtilde {\bf h}_{1,m-1} & 0\\
        \vdots & \vdots & \vdots && \vdots & \vdots\\
        \wtilde {\bf h}_{m-1,0} & 0& \wtilde {\bf h}_{m-1,1} & \cdots & \wtilde {\bf h}_{m-1,m-1} & 0\\
        0 &  {\bf h}_{m-1,0} & 0 & \cdots & 0 & {\bf h}_{m-1,m-1}
\end{bmatrix}.
\end{equation*}
Hence, a combination of  Proposition \ref{LemBH} and \eqref{-BB} deduces that  $-{\rm i}{\bf B}(L_1^{\vee},L_1;L^{\vee},L)$ is subsequently congruent to
$
 \begin{bmatrix}
 \mathbf{H}_{0}^{\mathscr S} & 0\\
0 & \mathbf{H}_{1}^{\mathscr S}
 \end{bmatrix}.
$
Since 
$(L_1)_{\mathrm{hcdc}}=\wtilde D_{\mathrm{hcdc}}$ is nonsingular upper triangular,
 $L_1(\lambda)$ is column reduced and hence regular as well. Note that, due to \cite[Propositions~A.3--A.5]{XD}, $G({\rm i}\lambda)$ is a GCRD of~$L(\lambda)$ and~$L_1(\lambda)$. Therefore, the remainder of the proof is an immediate consequence of Lemma \ref{lambdaoutInertia}.

\end{proof}

Analogous to the proof of \cite[Theorem 4.4]{XD}, from Theorem \ref{ThmInertia} we can obtain the following Hurwitz stability criterion.
\begin{theorem}\label{ThmHurwitz}
 Let $F(\lambda)\in\mathbb C[\lambda]^{p\times p}$ of nonzero degree with the nonsingular upper triangular $F_{\mathrm{hcdc}}$ and the Hermitian sequence of Markov parameters $\mathscr S$. Further let $
\mathbf{H}_{0}^{\mathscr S}$ and $\mathbf{H}_{1}^{\mathscr S}$ be defined as in \eqref{HankelMatrix}--\eqref{HankelMatrixII}. Then $F(\lambda)$ is Hurwitz stable if and only if  $\mathbf{H}_{0}^{\mathscr S}$ and $\mathbf{H}_{1}^{\mathscr S}$ are both positive definite.
\end{theorem}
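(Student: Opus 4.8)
The plan is to read the Hurwitz stability of $F(\lambda)$ directly off the inertia identities of Theorem \ref{ThmInertia}, using the elementary observation that $F(\lambda)$ is Hurwitz stable precisely when $\gamma'_+(F)=\gamma'_0(F)=0$ (equivalently, all $\deg\det F$ finite eigenvalues lie in the open left half-plane, i.e. $\gamma'_-(F)=\deg\det F$). Throughout I will use that the triples $\gamma'_\pm,\gamma'_0$ and the matrix indices $\pi(\cdot),\nu(\cdot),\delta(\cdot)$ are all nonnegative, and that $\mathbf{H}_0^{\mathscr S}$ and $\mathbf{H}_1^{\mathscr S}$ are Hermitian by Proposition \ref{ProSelfAdjoint}, so that positive definiteness of either matrix is equivalent to the vanishing of both its negative and its zero index.

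For sufficiency, assume $\mathbf{H}_0^{\mathscr S}\succ 0$ and $\mathbf{H}_1^{\mathscr S}\succ 0$, so that $\nu(\mathbf{H}_0^{\mathscr S})=\nu(\mathbf{H}_1^{\mathscr S})=\delta(\mathbf{H}_0^{\mathscr S})=\delta(\mathbf{H}_1^{\mathscr S})=0$. Substituting into Theorem \ref{ThmInertia} yields $\gamma'_+(F)=\gamma'_+(G)$ and $\gamma'_0(F)=-\gamma'_+(G)-\gamma'_-(G)$. Since the left-hand side of the second identity is nonnegative while its right-hand side is nonpositive, both must vanish; in particular $\gamma'_+(G)=\gamma'_-(G)=0$, whence $\gamma'_+(F)=0$ and $\gamma'_0(F)=0$. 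Thus $F(\lambda)$ is Hurwitz stable. This direction requires nothing beyond the sign constraints on the inertia counts.

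For necessity, assume $F(\lambda)$ is Hurwitz stable, i.e. $\gamma'_+(F)=\gamma'_0(F)=0$. The identity $\gamma'_+(F)=\nu(\mathbf{H}_0^{\mathscr S})+\nu(\mathbf{H}_1^{\mathscr S})+\gamma'_+(G)=0$ forces each of its nonnegative summands to vanish, so $\nu(\mathbf{H}_0^{\mathscr S})=\nu(\mathbf{H}_1^{\mathscr S})=0$ and $\gamma'_+(G)=0$. It then remains to show $\delta(\mathbf{H}_0^{\mathscr S})=\delta(\mathbf{H}_1^{\mathscr S})=0$; by the third identity of Theorem \ref{ThmInertia}, together with $\gamma'_+(G)=0$ and $\gamma'_0(F)=0$, this reduces to the single claim $\gamma'_-(G)=0$, equivalently that the GCRD $G(\lambda)$ of $D(\lambda)$ and $S(\lambda)$ is unimodular. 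This is the crux of the argument and the main obstacle, since a priori $G(\lambda)$ (being a right divisor of $F(\lambda)$) may well carry finite eigenvalues of $F(\lambda)$ in the open left half-plane.

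To remove this obstacle I would exploit the $\lambda^2$-symmetry built into $D(\lambda)$ and $S(\lambda)$ in \eqref{DS}. A direct computation from \eqref{alphaz} gives $D(-\lambda)=D(\lambda)E$ and $S(-\lambda)=-S(\lambda)E$, where $E:={\rm diag}(e_1,\ldots,e_p)$ is the constant involutory diagonal matrix with $e_k=1$ if ${\rm cdeg}_k F$ is even and $e_k=-1$ otherwise. Consequently $G(-\lambda)E^{-1}$ is again a common right divisor of $D(\lambda)$ and $S(\lambda)$, hence a right divisor of their GCRD $G(\lambda)$; comparing the degrees of the determinants shows the associated left quotient to be unimodular, so the finite spectrum $\sigma(G)$ is invariant under $\mu\mapsto-\mu$. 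On the other hand $G(\lambda)$ right-divides $F(\lambda)=D(\lambda)+S(\lambda)$, so $\sigma(G)\subseteq\sigma(F)$ lies in the open left half-plane. If $\mu\in\sigma(G)$ were a finite eigenvalue, then $-\mu\in\sigma(G)\subseteq\sigma(F)$ would have positive real part (or $\mu=0$ would lie on the imaginary axis), contradicting $\gamma'_+(F)=\gamma'_0(F)=0$. Hence $\sigma(G)=\emptyset$, $G(\lambda)$ is unimodular, $\gamma'_-(G)=0$, and therefore $\delta(\mathbf{H}_0^{\mathscr S})=\delta(\mathbf{H}_1^{\mathscr S})=0$. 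Combined with the already established vanishing of the negative indices, both block Hankel matrices are positive definite, which completes the proof.
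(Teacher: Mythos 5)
Your proof is correct, and its skeleton coincides with the paper's: both directions are read off Theorem \ref{ThmInertia}, and the crux of necessity is exactly the paper's observation that the splitting \eqref{DS} forces the spectrum of the GCRD $G(\lambda)$ to be symmetric under $\mu\mapsto-\mu$, which is incompatible with the Hurwitz stability of $F(\lambda)$. The differences are only in the bookkeeping, but they are worth recording. For sufficiency the paper does not use your sign analysis of the $\delta$-identity; it invokes a size count extracted from the \emph{proof} of Theorem \ref{ThmInertia} — that the orders of $\mathbf{H}_0^{\mathscr S}$ and $\mathbf{H}_1^{\mathscr S}$ sum to $\deg\det\wtilde D=\deg\det F$ — to get $\deg\det F\ge\gamma'_-(F)\ge\pi(\mathbf{H}_0^{\mathscr S})+\pi(\mathbf{H}_1^{\mathscr S})=\deg\det F$; your route is slightly more self-contained, since it uses only the three stated identities together with nonnegativity of all the inertia counts. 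For necessity the paper is more direct than you: writing $D=D_lG$ and $S=S_lG$, it factors $F(\lambda)=(D_l(\lambda)+S_l(\lambda))G(\lambda)$ and $F(-\lambda)=(D_l(\lambda)-S_l(\lambda))G(\lambda)\alpha(\lambda)^{-1}\alpha(-\lambda)$ (the constant sign matrix $\alpha(\lambda)^{-1}\alpha(-\lambda)$ is your $E$), so any $\lambda_0\in\sigma(G)$ immediately yields $\pm\lambda_0\in\sigma(F)$ and hence $\gamma'_-(G)=0$, without your detour through the GCRD maximality property and the determinant-degree comparison showing that $G(-\lambda)E$ and $G(\lambda)$ differ by a unimodular left factor. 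Your detour is nonetheless valid (and actually establishes the stronger conclusion that $G(\lambda)$ is unimodular, which is more than the paper needs); the paper then closes again with the size count $\deg\det F=\pi(\mathbf{H}_0^{\mathscr S})+\pi(\mathbf{H}_1^{\mathscr S})$, whereas you close with the $\delta$-identity — both are sound.
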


\begin{proof}
    The proof for ``if'' implication:  Assume that $\mathbf{H}_{0}^{\mathscr S}$ and $\mathbf{H}_{1}^{\mathscr S}$ are both positive definite. Then from the proof of Theorem \ref{ThmInertia}, the sum of 
    $
    \pi(\mathbf{H}_{0}^{\mathscr S})
    $ and $
    \pi(\mathbf{H}_{1}^{\mathscr S})
    $ equals the size of $\mathbf{H}(\wtilde D^{\vee},\wtilde S^{\vee};\wtilde S,\wtilde D)$, that is $\deg\det \wtilde D(\lambda)$.
      Accordingly, Theorem \ref{ThmInertia} yields that
\begin{equation*}
\deg\det F(\lambda)\geq \gamma'_-(F)\geq \pi(\mathbf{H}_{0}^{\mathscr S})+ \pi(\mathbf{H}_{1}^{\mathscr S})=\deg\det \wtilde D(\lambda)=\deg\det F(\lambda).
\end{equation*}
 Thus $\gamma'_-(F)=\deg\det F(\lambda)$, which means that $F(\lambda)$ is Hurwitz stable.

 The proof for ``only if'' implication: Let $F(\lambda)$ be Hurwitz stable and let $D(\lambda)$ and~$S(\lambda)$ be given as in \eqref{DS}. Assume that $G(\lambda)$ is a GCRD of~$D(\lambda)$ and~$S(\lambda)$ of the form
$$
D(\lambda)=D_l(\lambda)G(\lambda),\quad S(\lambda)=S_l(\lambda)G(\lambda).
$$  
Then
\begin{equation}\label{FG}
F(\lambda)=(D_l(\lambda)+S_l(\lambda))G(\lambda),\quad F(-\lambda)=(D_l(\lambda)-S_l(\lambda))G(\lambda)\alpha(\lambda)^{-1}\alpha(-\lambda).
\end{equation}
Suppose that there exists a nonzero eigenvalue $\lambda_0$ of $G(\lambda)$. It follows from \eqref{FG} that $\pm \lambda_0\in \sigma(F)$, which contradicts the Hurwitz stability of $F(z)$. Consequently, $\gamma'_-(G)=0$ and, due to Theorem \ref{ThmInertia},
$$
\deg\det F(\lambda)=\pi(\mathbf{H}_{0}^{\mathscr S})+ \pi(\mathbf{H}_{1}^{\mathscr S}),
$$
 which ensures the positive definiteness of $\mathbf{H}_{0}^{\mathscr S}$ and $\mathbf{H}_{1}^{\mathscr S}$.

\end{proof}

\begin{remark}\label{RemReduction}
Under the same assumption of Theorem \ref{ThmInertia}, we particularly set $F(\lambda)$ to be monic.
Then 
$$
F_d(\lambda)=\begin{cases}
F_e(\lambda), & \mbox{if }\deg F\mbox{ is even};\\
F_o(\lambda), & \mbox{if }\deg F\mbox{ is odd},
 \end{cases}\quad
 F_s(\lambda)=\begin{cases}
\lambda F_o(\lambda), & \mbox{if }\deg F\mbox{ is even};\\
F_e(\lambda), & \mbox{if }\deg F\mbox{ is odd},
 \end{cases}
$$
where $F_e(\lambda)$ and $F_o(\lambda)$ are, respectively, the even part and odd part of $F(\lambda)$ (see \cite[Definition 2.1]{XD}) satisfying~that
$$
F(\lambda)=F_e(\lambda^2)+\lambda F_o(\lambda^2).
$$
In this case, $\mathscr S$ coincides with the truncated sequence of right Markov parameters of $F(\lambda)$ (resp. of the second type of $F(\lambda)$) when $\deg F$ is even (resp. odd) (see the definition in \cite[Definition 2.2]{XD}). 
In view of (iv) of Remark \ref{RemIndex} and Remark \ref{RemtI}, the inertia representation  \cite[Theorem 3.5]{XD} and the Hurwitz stability criterion \cite[Theorem 4.4]{XD} for $F(\lambda)$ in the monic case are immediate consequences of Theorems \ref{ThmInertia} and 
\ref{ThmHurwitz}.
\end{remark}


In what follows, we provide numerical examples to illustrate Theorem \ref{ThmHurwitz}. 
 All numerical tests were performed by using MATLAB R2024b.
Consider a matrix polynomial $F(\lambda)\in \mathbb C[\lambda]^{4\times 4}$ of degree $3$ 
\[\resizebox{\linewidth}{!}{$\begin{bmatrix} \lambda^{3} +  \left(1 - {\rm i}\right)\lambda^{2}  +  \left(-4 + 9 {\rm i}\right)\lambda  - 57 + 43 { \rm i}&  \left(2 + {\rm i}\right)\lambda  + 2 - 3 { \rm i}&  \left(-1 + 3 {\rm i}\right)\lambda  - 6 - 4 { \rm i}& - 2  \lambda - 2 - 2 {\rm i}\\- 2 { \rm i} \lambda^{2} +  \left(12 + 17 {\rm i}\right)\lambda  - 39 + 76 { \rm i}&  \lambda^{2} + 4  \lambda - 2 + 4 { \rm i}& {\rm i} \lambda^{2} + 5 {\rm i} \lambda - 2 &  \left(1 - {\rm i}\right)\lambda  - 1 - 3 {\rm i}\\ \lambda^{3} +  \left(1 + {\rm i}\right)\lambda^{2}  +  \left(36 - {\rm i}\right)\lambda  + 29 - 51 {\rm i}&  \left(-2 + {\rm i}\right)\lambda  - 6 + 7 {\rm i}&  \left(-1 - 3 {\rm i}\right)\lambda  - 12 + 4 {\rm i}& 2 + 2 {\rm i}\\ \left(-5 + 2 {\rm i}\right)\lambda  + 12 + 17 {\rm i}& {\rm i} \lambda^{2} + 10 + 16 {\rm i}&  \lambda^{2} +  \lambda + 14 + 8 {\rm i}&  \left(-1 + {\rm i}\right)\lambda  - 1 + {\rm i}\end{bmatrix}.$}\]
Since the leading coefficient of $F(\lambda)$ is singular and non-Hermitian, some recent Hurwitz stability criteria (e.g., \cite[Theorem 6.8]{MMW}, \cite[Theorems 3.5 and 4.4]{XD}) are not applicable.  We check that $F_{\mathrm{hrdc}}$ is nonsingular and thus $F(\lambda)$ is column reduced. Via \ref{step1}--\ref{step2}, we form a pair of matrix polynomials
\begin{align*}
&F_{d}(\lambda)=\begin{bmatrix} \lambda - 4 + 9 {\rm i}& 2 - 3 {\rm i}& -6 - 4 {\rm i}& -2\\12 + 17 {\rm i}&  \lambda - 2 + 4 {\rm i}& {\rm i} \lambda - 2 & 1 - {\rm i}\\ \lambda + 36 - {\rm i}& -6 + 7 {\rm i}& -12 + 4 {\rm i}& 0\\-5 + 2 {\rm i}& {\rm i} \lambda + 10 + 16 {\rm i}&  \lambda + 14 + 8 {\rm i}& -1 + {\rm i}\end{bmatrix},\\
&F_{s}(\lambda)=\begin{bmatrix}\left(1 - {\rm i}\right) \lambda  - 57 + 43 {\rm i}& \left(2 + {\rm i}\right) \lambda  & \left(-1 + 3 {\rm i}\right) \lambda  & -2 - 2 {\rm i}\\- 2 {\rm i} \lambda - 39 + 76 {\rm i}& 4  \lambda & 5 {\rm i} \lambda & -1 - 3 {\rm i}\\\left(1 + {\rm i}\right) \lambda  + 29 - 51 {\rm i}& \left(-2 + {\rm i}\right) \lambda  & \left(-1 - 3 {\rm i}\right) \lambda  & 2 + 2 {\rm i}\\12 + 17 {\rm i}& 0 &  \lambda & -1 + {\rm i}\end{bmatrix}.
\end{align*}
Then the sequence of Markov parameters $\mathscr S:=({\bf s}_k)_{k=0}^2$ of $F(\lambda)$ are computed by Algorithm \ref{alg1} as
\begin{align*}
&{\bf s}_0=\begin{bmatrix}1 & {\rm i}& -1 & - {\rm i}\\- {\rm i}& 2 & 2 {\rm i}& -2\\-1 & - 2 {\rm i}& 3 & 3 {\rm i}\\{\rm i}& -2 & - 3 {\rm i}& 4\end{bmatrix},\quad {\bf s}_1= 
\begin{bmatrix}8 & -1 + 4 {\rm i}& -5 + {\rm i}& 4 - 3 {\rm i}\\-1 - 4 {\rm i}& 6 & 2 + {\rm i}& -5 - 3 {\rm i}\\-5 - {\rm i}& 2 - {\rm i}& 4 & -3\\4 + 3 {\rm i}& -5 + 3 {\rm i}& -3 & 6\end{bmatrix},\\[3mm]
& {\bf s}_2=\begin{bmatrix}117 & -56 + 41 {\rm i}& -87 + 26 {\rm i}& 86 - 15 {\rm i}\\-56 - 41 {\rm i}& 92 & 63 & -85 - 41 {\rm i}\\-87 - 26 {\rm i}& 63 & 81 & -69 - 26 {\rm i}\\86 + 15 {\rm i}& -85 + 41 {\rm i}& -69 + 26 {\rm i}& 102\end{bmatrix}.
\end{align*}
\ref{step4} calculates the column index sets and shifted column index sets of $F_d(\lambda)$  
$$\wtilde{\mathcal I}_{0}=\{1,2,3,4\},\quad \wtilde{\mathcal I}_{1}=\{1\},\quad  \mathcal I_{0}=\{1,2,3\},\quad  \mathcal I_{1}=\emptyset.$$
 \ref{step5}  constructs two finite Hankel matrices
$$
\mathbf{H}_{0}^{\mathscr{S}} = 
\begin{bmatrix}1 & {\rm i}& -1 & - {\rm i}& 8\\- {\rm i}& 2 & 2 {\rm i}& -2 & -1 - 4 {\rm i}\\-1 & - 2 {\rm i}& 3 & 3 {\rm i}& -5 - {\rm i}\\{\rm i}& -2 & - 3 {\rm i}& 4 & 4 + 3 {\rm i}\\8 & -1 + 4 {\rm i}& -5 + {\rm i}& 4 - 3 {\rm i}& 117\end{bmatrix},\quad\mathbf{H}_{1}^{\mathscr{S}}=
\begin{bmatrix}8 & -1 + 4 {\rm i}& -5 + {\rm i}\\-1 - 4 {\rm i}& 6 & 2 + {\rm i}\\-5 - {\rm i}& 2 - {\rm i}& 4\end{bmatrix}.
$$
We check that $\mathbf{H}_{0}^{\mathscr{S}}$ and $\mathbf{H}_{1}^{\mathscr{S}}$ are both positive definite.
Therefore, according to Theorem \ref{ThmHurwitz}, $F(\lambda)$ is Hurwitz stable.

Now we analyze the robustness of the criterion in Theorem \ref{ThmHurwitz} for testing the Hurwitz stability of the aforementioned $F(\lambda)$ under perturbation. 
Suppose that $F(\lambda)$ is written in the form
$$
F(\lambda)=P_3 \lambda^3+P_2 \lambda^2+P_1 \lambda+P_0. 
$$
For preset parameters $\varepsilon$,
we generate $1000$ matrix tuples $(\Delta P_0,\Delta P_1,\Delta P_2,\Delta P_3)$ such that, for each $\Delta P_i$, the $k$-th column contains entries uniformly distributed in the interval $(-1,1)$ if $k\in \mathcal I_{i-1}^c(F)$ and zeros otherwise.
Then, the matrix polynomial $F(\lambda)$ is perturbed to be
$$
\wtilde F(\lambda)=\big(P_3+\varepsilon  \Delta P_3\big) \lambda^3+\big(P_2+\varepsilon \Delta P_2\big) \lambda^2+\big(P_1+\varepsilon \Delta P_1\big) \lambda+ P_0+ \varepsilon \Delta P_0.
$$

\begin{figure}[htbp]
  \centering
  \begin{subfigure}{\textwidth} 
    \centering
    \includegraphics[width=\textwidth, height=11cm]{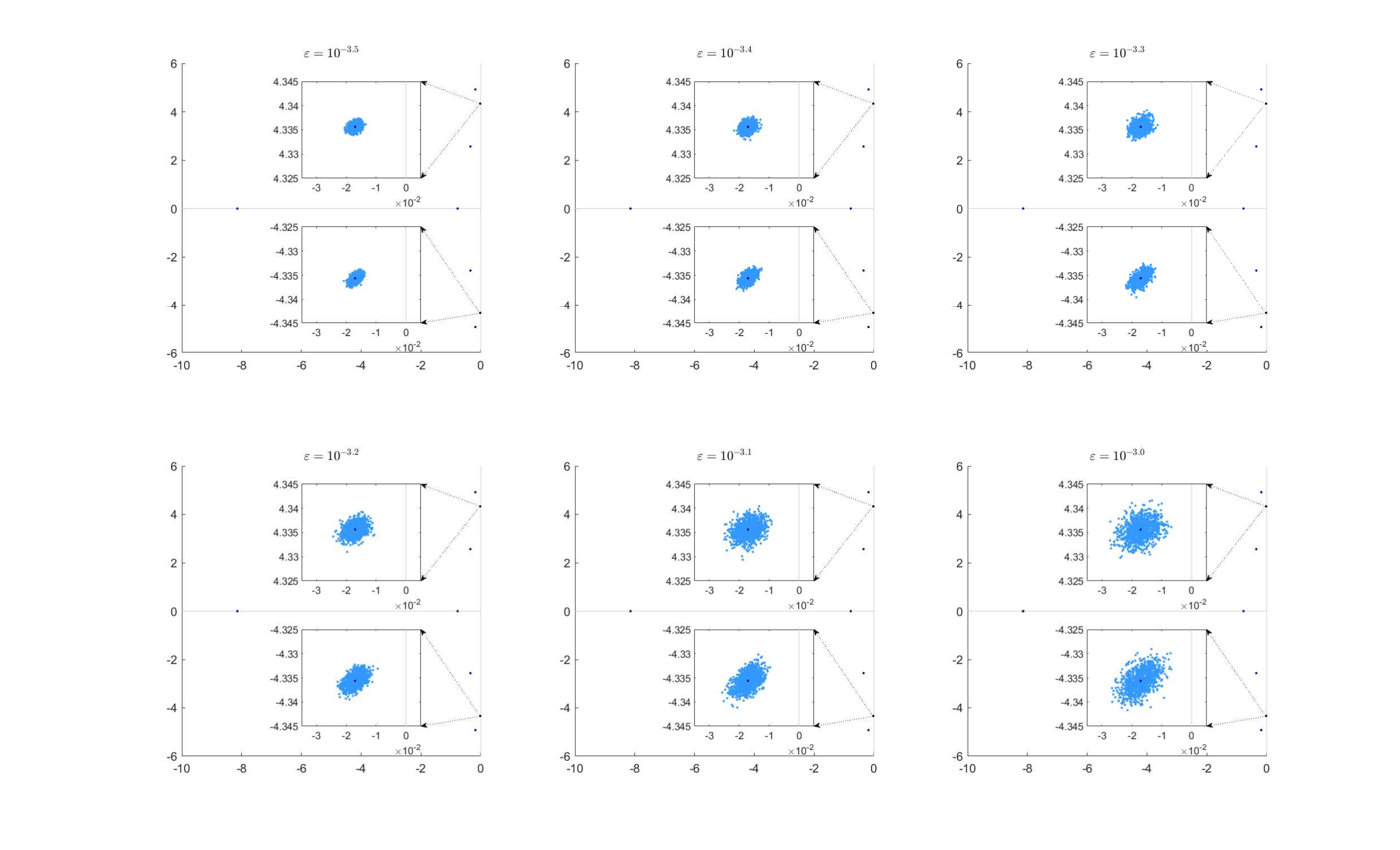}
    \vspace{-1cm}
    \subcaption{The unperturbed eigenvalues and the perturbed eigenvalues of $F(\lambda)$}  
    \label{fig:1}
  \end{subfigure}
  \begin{subfigure}{\textwidth} 
    \centering
    \includegraphics[width=\textwidth, height=10cm]
{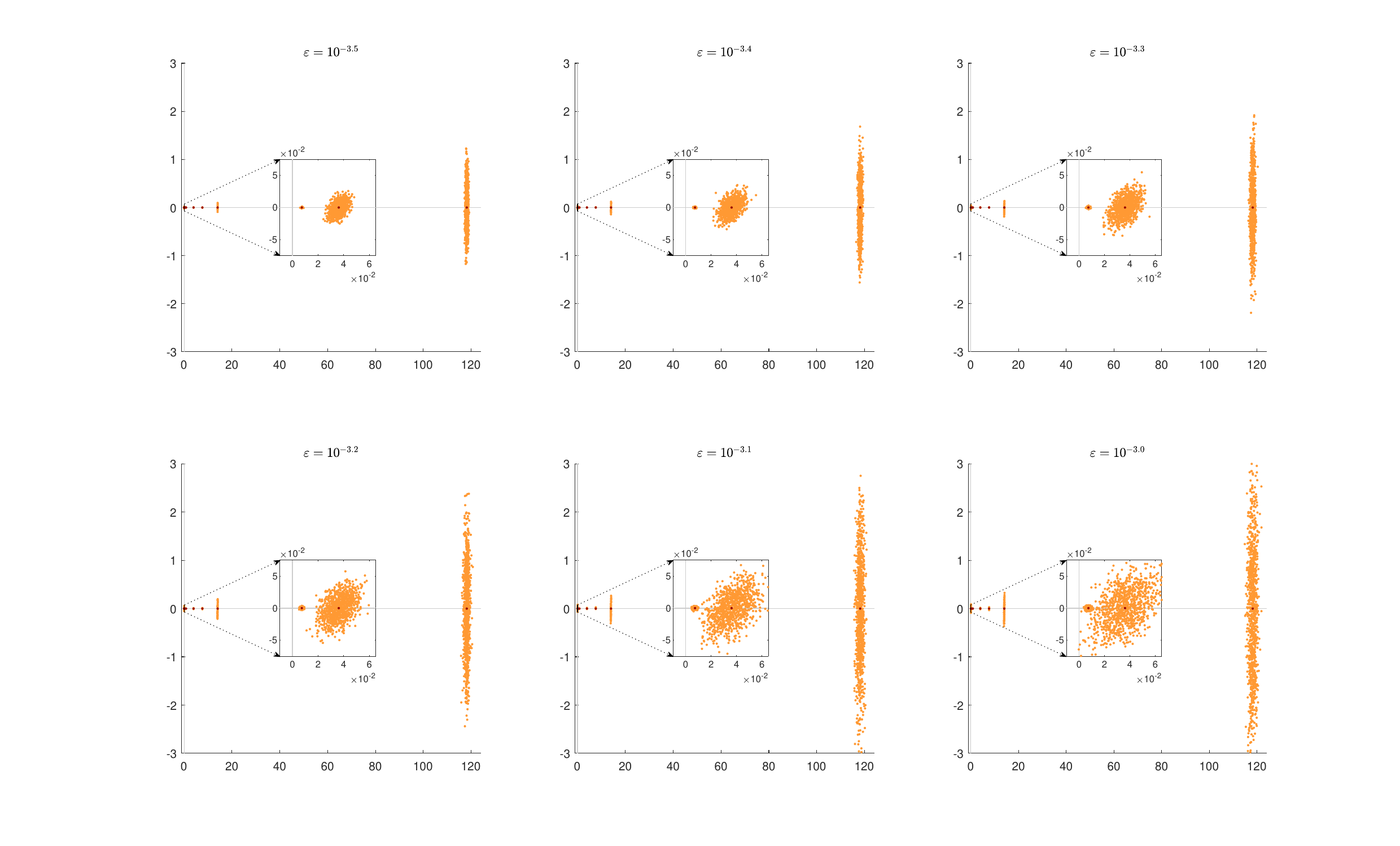}
     \vspace{-1cm}
    \subcaption{The unperturbed eigenvalues and the perturbed eigenvalues of $ \mathbf{H}_{0}^{\mathscr S}$ and $\mathbf{H}_{1}^{\mathscr S}$}
    \label{fig:2}
  \end{subfigure}
  \caption{} 
  \label{Fig2}
\end{figure}

\begin{table}[t]
\centering
\begin{minipage}{\textwidth}
\centering
\begin{tabular}{l!{\vrule width 1pt}w{c}{1.5cm}w{c}{1.5cm}w{c}{1.5cm}w{c}{1.5cm}w{c}{1.5cm}w{c}{1.5cm}w{c}{1.5cm}w{c}{1.5cm}}
\toprule
$\log_{10}\varepsilon$ & {$r_1({F})$} & {$r_2({F})$} & {$r_3({F})$} & {$r_4({F})$} & {$r_5({F})$} & {$r_6({F})$} & {$r_7({F})$} & {$r_8({F})$} \\
-3.5 & 5.43e-4 & 2.77e-4 & 4.55e-4 & 4.45e-4 & 8.81e-4 & 8.77e-4 & 8.33e-4 & 8.91e-4 \\
-3.4 & 7.89e-4 & 3.56e-4 & 5.58e-4 & 5.92e-4 & 1.16e-3 & 1.26e-3 & 1.08e-3 & 1.14e-3 \\
-3.3 & 9.24e-4 & 4.64e-4 & 6.55e-4 & 7.66e-4 & 1.40e-3 & 1.31e-3 & 1.31e-3 & 1.22e-3 \\
-3.2 & 1.21e-3 & 5.55e-4 & 8.08e-4 & 9.28e-3 & 1.79e-3 & 1.83e-3 & 1.73e-3 & 1.84e-3 \\
-3.1 & 1.59e-3 & 7.75e-4 & 1.19e-3 & 1.06e-3 & 2.24e-3 & 2.34e-3 & 1.91e-3 & 2.10e-3 \\
-3.0 & 2.20e-3 & 9.30e-4 & 1.38e-3 & 1.48e-3 & 2.74e-3 & 2.75e-3 & 2.46e-3 & 2.65e-3 \\
\bottomrule
\end{tabular}
\end{minipage}
\begin{minipage}{\textwidth}
\centering
\begin{tabular}{
  l!{\vrule width 1pt}w{c}{1.5cm}w{c}{1.5cm}w{c}{1.5cm}w{c}{1.5cm}w{c}{1.5cm}!{\vrule width 1pt}w{c}{1.5cm}w{c}{1.5cm}w{c}{1.5cm}
}
\toprule
{$\log_{10}\varepsilon$} & 
{$r_1$($\mathbf{H}_0^{\mathscr{S}}$)} & {$r_2$($\mathbf{H}_0^{\mathscr{S}}$)} & {$r_3$($\mathbf{H}_0^{\mathscr{S}}$)} & {$r_4$($\mathbf{H}_0^{\mathscr{S}}$)} & {$r_5$($\mathbf{H}_0^{\mathscr{S}}$)} & 
{$r_1$($\mathbf{H}_1^{\mathscr{S}}$)} & {$r_2$($\mathbf{H}_1^{\mathscr{S}}$)} & {$r_3$($\mathbf{H}_1^{\mathscr{S}}$)} \\
-3.5 & 1.06e-2 & 1.18e-3 & 1.05e-3 & 1.07e-3 & 2.41e-1 & 6.91e-3 & 1.93e-3 & 7.40e-1 \\
-3.4 & 1.53e-2 & 1.63e-3 & 1.37e-3 & 1.32e-3 & 2.98e-1 & 1.03e-2 & 2.53e-3 & 9.81e-1 \\
-3.3 & 1.95e-2 & 2.14e-3 & 1.79e-3 & 1.71e-3 & 4.33e-1 & 1.42e-2 & 3.27e-3 & 1.56e+0 \\
-3.2 & 2.54e-2 & 2.33e-3 & 2.36e-3 & 2.36e-3 & 4.96e-1 & 1.55e-2 & 4.68e-3 & 1.59e+0 \\
-3.1 & 3.24e-2 & 3.69e-3 & 2.69e-3 & 2.88e-3 & 7.10e-1 & 2.36e-2 & 5.45e-3 & 2.33e+0 \\
-3.0 & 3.75e-2 & 4.25e-3 & 3.26e-3 & 3.15e-3 & 7.50e-1 & 2.75e-2 & 6.87e-3 & 2.61e+0 \\
\bottomrule
\end{tabular}
\end{minipage}
\caption{}
\label{tab}
\end{table}

\begin{figure}[!htbp]
    \centering
    \begin{minipage}[htbp]{0.48\textwidth}  
        \centering  
        \includegraphics[width=\linewidth]{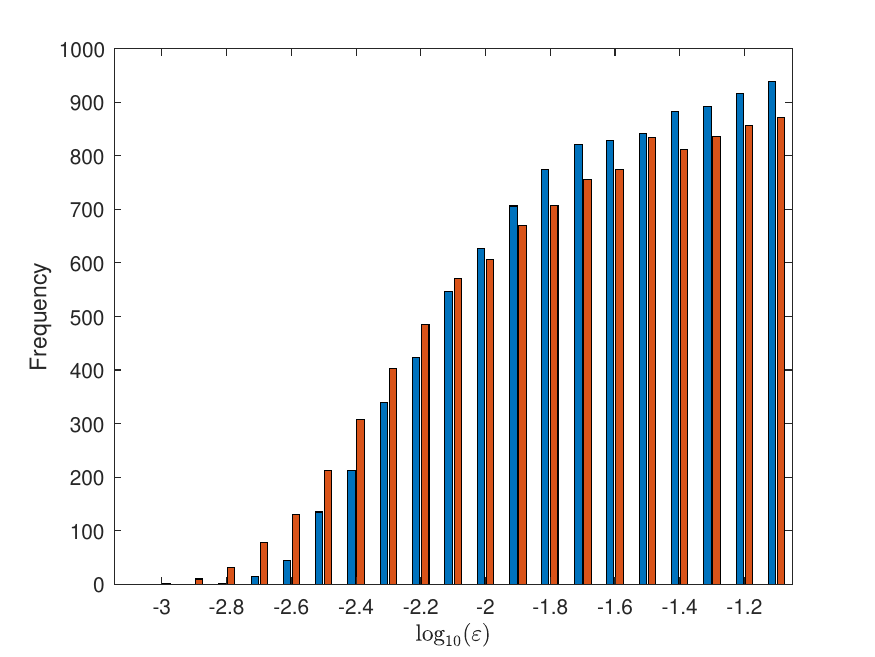}  
        \subcaption{}
        \label{fig3a}
    \end{minipage}
    \hfill  
    \begin{minipage}[htbp]{0.48\textwidth}
        \centering 
        \includegraphics[width=\linewidth]{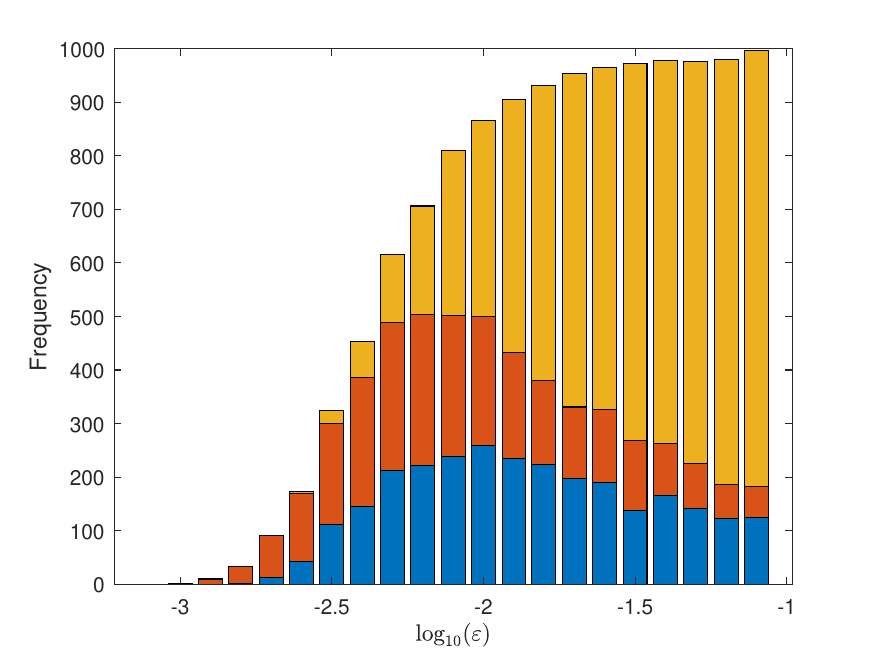}
        \subcaption{}
        \label{fig3b}
    \end{minipage}
    \caption{ Fig. \ref{fig3a} compares the violations of the conditions in \eqref{H01}: (i) the number of instability cases of $\wtilde F(\lambda)$ (blue), and (ii) the count of instances where perturbed eigenvalues of  $\mathbf{H}_{0}^{\mathscr S}$ or $\mathbf{H}_{1}^{\mathscr S}$ escape $\mathbb C_+$ (red). Fig. \ref{fig3b} further categorizes the violations into:
 (i) Cases where $\wtilde F(\lambda)$ loses Hurwitz stability despite all eigenvalues of  $\mathbf{H}_{0}^{\mathscr S}$ and
 $\mathbf{H}_{1}^{\mathscr S}$ remains in $\mathbb C_+$ (blue); (ii) Instances where perturbed eigenvalues of $\mathbf{H}_{0}^{\mathscr S}$ or $\mathbf{H}_{1}^{\mathscr S}$ cross or touch ${\rm i}\mathbb R$ while $\wtilde F(\lambda)$ retains Hurwitz stability (red); (iii) Cases concurrently losing Hurwitz stability of $\wtilde F(\lambda)$ and having at least one perturbed eigenvalues of $\mathbf{H}_{0}^{\mathscr S}$ or $\mathbf{H}_{1}^{\mathscr S}$ leaving $\mathbb C_+$ (yellow).}
    \label{fig3}
\end{figure}

In the Fig. \ref{fig:1} are superimposed as dark dots the unperturbed eigenvalues and light dots the perturbed eigenvalues of $F(\lambda)$ for $\varepsilon=10^{-3-0.1k}$, $k=0,\ldots,5$. All unperturbed eigenvalues $\lambda_k(F)$ of $F(\lambda)$ are computed via MATLAB's \texttt{polyeig} function as
\begin{align*}
\lambda_1(F) &\approx -8.1437,\quad \quad \quad \quad \ \ \  \
\lambda_2(F) \approx -0.7723,\quad \quad \quad \quad \ \ \  \
\lambda_3(F)\approx -0.3455 + 2.5642{\rm i}, \\
\lambda_4(F) & \approx -0.3455 - 2.5642{\rm i},\ \ \ 
\lambda_5(F)  \approx -0.1795 + 4.9232{\rm i},\ \ \ \,
\lambda_6(F) \approx -0.1795 - 4.9232{\rm i},\ \ 
\\
\lambda_7(F) &\approx -0.0170 + 4.3356{\rm i}, \ \ \ 
\lambda_8(F) \approx -0.0170 - 4.3356{\rm i}. 
\end{align*}
   In the Fig. \ref{fig:2} are  superimposed as dark dots the unperturbed eigenvalues and light dots the perturbed eigenvalues of $ \mathbf{H}_{0}^{\mathscr S}$ and $ \mathbf{H}_{1}^{\mathscr S}$ for $\varepsilon=10^{-3-0.1k}$, $k=0,\ldots,5$. All unperturbed eigenvalues $\lambda_k(\mathbf{H}_0^{\mathscr{S}})$ and $\lambda_k(\mathbf{H}_1^{\mathscr{S}})$ of $\mathbf{H}_0^{\mathscr{S}}$ and $\mathbf{H}_1^{\mathscr{S}}$ respectively are computed via MATLAB's \texttt{eig} function as
\begin{align*}
\lambda_1(\mathbf{H}_0^{\mathscr{S}}) &\approx 118.1688, \ \ 
\lambda_2(\mathbf{H}_0^{\mathscr{S}}) \approx 7.6381, \ \ 
\lambda_3(\mathbf{H}_0^{\mathscr{S}}) \approx 0.8146, \ \ 
\lambda_4(\mathbf{H}_0^{\mathscr{S}}) \approx 0.3711, \ \
\lambda_5(\mathbf{H}_0^{\mathscr{S}}) \approx 0.0073, \\ 
\lambda_1(\mathbf{H}_1^{\mathscr{S}}) &\approx 14.0143, \ \ \ \, 
\lambda_2(\mathbf{H}_1^{\mathscr{S}}) \approx 3.9496,  \ \  
\lambda_3(\mathbf{H}_1^{\mathscr{S}}) \approx 0.0361. 
\end{align*}
   The relative perturbation errors $ r_k({F}) $, $ r_k(\mathbf{H}_0^{\mathscr{S}}) $, and $ r_k(\mathbf{H}_1^{\mathscr{S}}) $, corresponding to the eigenvalues $ \lambda_k({F}) $ ($ k = 1, \ldots, 8 $), $ \lambda_k(\mathbf{H}_0^{\mathscr{S}}) $ ($ k = 1, \ldots, 5 $), and $ \lambda_k(\mathbf{H}_1^{\mathscr{S}}) $ ($ k = 1, 2, 3 $) respectively, are summarized in Table~\ref{tab}. 
In view of \cite[Claims 2.4--2.5]{XD}, a claim for the robustness of stability criterion in Theorem \ref{ThmHurwitz}  with the Hermitian condition of $\mathscr S$ being relaxed, that is,  
 \begin{equation}\label{H01} 
\wtilde F(\lambda)\ \mbox{is Hurwitz stable if and only if all perturbed eigenvalues of } \mathbf{H}_{0}^{\mathscr S} \mbox{and}\ \mathbf{H}_{1}^{\mathscr S} \mbox{are located in}\ \mathbb C_+ 
\end{equation}
 does not hold in general. \eqref{H01} may hold for perturbations 
$\varepsilon = 10^{-3.1-0.1k}$ ($k = 0, \dots, 4$), as no violations were observed in Fig. \ref{Fig2}. Under broader perturbations $\varepsilon=10^{-3.1+0.1k}$ ($k=0,\ldots,20$),
 Fig. \ref{fig3} quantifies the violations and failure instances of the equivalence \eqref{H01}. Our experiments observe a reversal in the relative prevalence around $\varepsilon=10^{-2.1}$: Case (i) versus Case (ii) violations in Fig. \ref{fig3a}, and Case (i) versus Case (ii) failures in Fig. \ref{fig3b}.

\section*{Acknowledgement}
The authors thank  Professor Vanni Noferini  for in-person discussions, which provided invaluable theoretical and computational insights for this research.
The authors thank  Yipeng Zhang for his contribution to Fig. \ref{fig:main1}. This work was supported by the Scientific Research Fund from the National Natural Science Foundation of China (12401489), the Beijing Natural Science Foundation (1244044) and Beijing Normal University at Zhuhai (111032119).

\appendix

\section{Infinite companion matrix}\label{SecApp}

This appendix recalls the definition of the infinite companion matrix of row reduced matrix polynomials, as introduced in \cite{BPV1999,BPV}.

For a regular matrix polynomial $D(\lambda)\in \mathbb C[\lambda]^{p\times p}$, the remainder operator $\mathcal R_D$ on 
$ \mathbb C[\lambda]^{p\times 1}$ is defined as
$$
\mathcal R_D(f(\lambda)):=D(\lambda) 
\varPi_- (D(\lambda)^{-1} f(\lambda)), 
$$
where $\varPi_-$ stands for the operator mapping from any rational matrix $r(\lambda)\in \mathbb C(\lambda)^{p\times 1}$ to its strictly proper part (i.e., the rational matrix whose elements are the strictly proper parts of the corresponding elements in $r(\lambda)$). 
The remainder space $\mathscr R_D$ is defined as the range of $\mathcal R_D$.

Let 
$
\mathscr B:=(\beta_t)_{t=0}^{\infty}
$ be the monomial basis of the linear space $\mathbb C[\lambda]^{p\times 1}$,
where 
$$
\beta_t(\lambda):=e_k \lambda^j,\quad t=jp+k,\ 0\leq k<p,\ j=0,1,\ldots, 
$$
and $e_k$ ($k=0,\ldots,p-1$) are standard basis vectors. 
The infinite matrix representation of the operator $\mathcal R_D$ with respect to the basis 
$\mathscr B$ is called the extended infinite companion matrix of $D(\lambda)$ and denoted by $\wtilde C_{\infty}(D)$. In particular, when $D(\lambda)$ is row reduced, $(\beta_t)_{t\in \mathcal I_r(D)}$ forms a basis of $\mathscr R_D$ (see, e.g., \cite[Lemma 7]{BPV}). 
The infinite companion matrix $C_{\infty}(D)$ is defined as the matrix representation of the operator $\mathcal R_D$: $\mathbb C[\lambda]^{p\times 1}\rightarrow \mathscr R_D$ with respect to the bases $\mathscr B$ and $(\beta_t)_{t\in \mathcal I_r(D)}$. Equivalently,  $C_{\infty}(D)=\wtilde C_{\infty}(D)_{\mathcal I_r(D)}$.  \cite[Theorem 8]{BPV1999} reveals a connection between $\wtilde C_{\infty}(D)$ and the infinite Hankel matrix $\mathscr{H}_{\infty}(D^{-1})$.

\begin{proposition}\label{ProCinftyD}
Let $D(\lambda)\in \mathbb C[\lambda]^{p\times p}$ be regular with the representation 
$$
D(\lambda)=\sum_{k=0}^{\infty} D_k \lambda^k\ (D_k=0\ {\rm if}\ k>\deg D).
$$
Then 
$$
\wtilde C_{\infty}(D)=\Delta(D)\mathscr{H}_{\infty}(D^{-1}),
$$
where $\Delta(D)=\big(D_{i+j+1}\big)_{i,j=0}^{\infty}$. 
\end{proposition}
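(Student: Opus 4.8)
The plan is to verify the identity columnwise, by computing the action of the remainder operator $\mathcal R_D$ on each monomial basis vector $\beta_t$ and reading off the resulting coordinates in the basis $\mathscr B$. First I would dispose of the trivial case $\deg D=0$ (where $\mathcal R_D\equiv 0$ and both sides vanish) and assume $\deg D=n\geq 1$, so that $D(\lambda)^{-1}$ is strictly proper and admits the Laurent expansion $D(\lambda)^{-1}=\sum_{r=0}^{\infty}\mathbf{s}_r\lambda^{-r-1}$ with $\mathscr{H}_{\infty}(D^{-1})=(\mathbf{s}_{i+j})_{i,j=0}^{\infty}$. Comparing coefficients of powers of $\lambda$ in the identity $D(\lambda)D(\lambda)^{-1}=I_p$ furnishes the recurrence $\sum_{m=0}^{n}D_m\mathbf{s}_{m+v-1}=0$ for every $v\geq 1$; this will be the engine guaranteeing that the output of $\mathcal R_D$ is genuinely a polynomial.

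Next I would fix a basis vector $\beta_t(\lambda)=e_k\lambda^j$ (so $t=jp+k$, $0\leq k<p$) and compute $\mathcal R_D(\beta_t)$ directly from its definition. Expanding $D(\lambda)^{-1}e_k\lambda^j=\sum_{r\geq 0}\mathbf{s}_r e_k\lambda^{j-r-1}$ and applying $\varPi_-$ retains exactly the terms with $r\geq j$, giving $\varPi_-(D^{-1}e_k\lambda^j)=\sum_{s=0}^{\infty}\mathbf{s}_{s+j}e_k\lambda^{-s-1}$. Multiplying by $D(\lambda)=\sum_{m=0}^{n}D_m\lambda^m$ and collecting powers of $\lambda$, the negative powers cancel by the recurrence above, leaving the polynomial whose $\lambda^i$-coefficient equals $\bigl(\sum_{m=i+1}^{n}D_m\mathbf{s}_{m-i-1+j}\bigr)e_k$.

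Finally I would translate this into block form. By the definition of $\wtilde C_{\infty}(D)$, the $(i,j)$ block (rows $ip,\ldots,ip+p-1$; columns $jp,\ldots,jp+p-1$) has its $k$-th column equal to the $\lambda^i$-coefficient of $\mathcal R_D(e_k\lambda^j)$; letting $k$ range over $0,\ldots,p-1$ assembles the block into $\sum_{m=i+1}^{n}D_m\mathbf{s}_{m-i-1+j}$. Reindexing by $b=m-i-1\geq 0$ rewrites this as $\sum_{b\geq 0}D_{i+b+1}\mathbf{s}_{b+j}$, which is precisely the $(i,j)$ block of the product $\Delta(D)\mathscr{H}_{\infty}(D^{-1})=(D_{a+b+1})_{a,b}\,(\mathbf{s}_{c+d})_{c,d}$. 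Matching these blocks for all $i,j$ completes the proof. Alternatively, since the statement is the proper-case transcription of \cite[Theorem 8]{BPV1999}, one may simply invoke it.

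I expect the only real obstacle to be the index bookkeeping: keeping the monomial-basis ordering $t=jp+k$ consistent with the $p\times p$ block partition underlying $\Delta(D)$ and $\mathscr{H}_{\infty}(D^{-1})$, and correctly tracking the truncation imposed by $\varPi_-$. The cancellation of the strictly proper tail after multiplication by $D(\lambda)$ deserves care, but it is governed exactly by the recurrence $\sum_{m}D_m\mathbf{s}_{m+v-1}=0$, so no genuinely new difficulty arises there; a useful consistency check along the way is that the computation forces $\mathcal R_D(e_k)=e_k$, as one expects for the remainder of a constant vector modulo $D(\lambda)$.
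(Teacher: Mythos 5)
Your columnwise verification is the right move here, and it is worth noting that the paper itself offers no proof of this proposition: it is stated in Appendix \ref{SecApp} purely as a recollection of \cite[Theorem 8]{BPV1999}, so your closing alternative ("simply invoke it") is literally what the authors do, and the substance of your proposal is the direct computation they leave to the citation. That computation is organized correctly: the $(i,j)$ block of $\wtilde C_{\infty}(D)$ comes out as $\sum_{m\geq i+1}D_m\mathbf{s}_{m-i-1+j}$, which after $b=m-i-1$ is the $(i,j)$ block $\sum_{b\geq 0}D_{i+b+1}\mathbf{s}_{b+j}$ of $\Delta(D)\mathscr{H}_{\infty}(D^{-1})$, and the disappearance of the negative powers is exactly the recurrence $\sum_{m=0}^{n}D_m\mathbf{s}_{m+v-1}=0$ ($v\geq 1$) extracted from $D(\lambda)D(\lambda)^{-1}=I_p$.

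However, one assertion is false and, as written, invalidates your middle step for inputs the proposition covers: regularity together with $\deg D\geq 1$ does \emph{not} imply that $D(\lambda)^{-1}$ is strictly proper, or even proper. Take $D(\lambda)=\bigl[\begin{smallmatrix}1&\lambda\\0&1\end{smallmatrix}\bigr]$: it is regular of degree one, yet $D(\lambda)^{-1}=\bigl[\begin{smallmatrix}1&-\lambda\\0&1\end{smallmatrix}\bigr]$ is a nonconstant polynomial, so the expansion $D^{-1}=\sum_{r\geq 0}\mathbf{s}_r\lambda^{-r-1}$ on which your second paragraph rests simply does not exist. The repair is cheap: expand at infinity as $D(\lambda)^{-1}=P(\lambda)+\sum_{r\geq 0}\mathbf{s}_r\lambda^{-r-1}$ with $P$ a matrix polynomial (only the coefficients $\mathbf{s}_r$, $r\geq 0$, enter $\mathscr{H}_{\infty}(D^{-1})$); then $\varPi_-$ annihilates $P(\lambda)e_k\lambda^j$ along with the $r<j$ terms, so $\varPi_-(D^{-1}e_k\lambda^j)=\sum_{s\geq 0}\mathbf{s}_{s+j}e_k\lambda^{-s-1}$ still holds, and the recurrence is unaffected because the coefficient of $\lambda^{-v}$, $v\geq 1$, in $DD^{-1}=I_p$ receives no contribution from $P$. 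With that amendment the rest of your argument goes through verbatim. Be aware that your closing "consistency check" $\mathcal R_D(e_k)=e_k$ suffers from the same unwarranted assumption: in general $\mathcal R_D(e_k)=e_k-D(\lambda)P(\lambda)e_k$, which equals $e_k$ only when the polynomial part vanishes (for instance $\mathcal R_{I_p}\equiv 0$, consistent with both sides of the proposition being zero when $\deg D=0$), so it should not be treated as a forced outcome of the computation.
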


\bibliographystyle{amsplain}

\end{document}